\def \Z {{\mathbb Z}}
\def \R {{\mathbb R}}
\def \N {{\mathbb N}}
\def \da {\downarrow}
\def \PP {{\mathbb P}}
\def \EE {{\mathbb E}}
\def \ES {{\rm E}}
\def \Sp {{\rm Sp}}
\def \cL {{\mathcal L}}
\def \cM {{\mathcal M}}
\def \cI {{\mathcal I}}
\def \cH {{\mathcal H}}
\def \cB {{\mathcal B}}
\newcommand {\refeq}[1] {(\ref{#1})}
\newcommand {\cro}[1] {\left[ {#1} \right]}
\newcommand {\pare}[1] {\left( {#1} \right)}
\newcommand {\ind}[1] {\hbox{ 1\hskip -3pt I}_{#1}}
\newcommand {\bra}[1] {\left\langle {#1} \right\rangle}
\newcommand {\nor}[1] { \left\| {#1} \right\|}
\newcommand {\acc}[1] {\left\{ {#1} \right\}}
\newcommand {\va}[1] {\left| {#1} \right|}
\def\one{\rlap{\mbox{\small\rm 1}}\kern.15em 1}
\begin{document}

\title*{Parabolic Anderson model with a finite number of moving catalysts}
\author{F.\ Castell, O.\ G\"un and G.\ Maillard}
\institute{F.\ Castell \at CMI-LATP, Universit\'e de Provence,
39 rue F. Joliot-Curie, F-13453 Marseille Cedex 13, France, \email{castell@cmi.univ-mrs.fr}
\and O.\ Gun \at CMI-LATP, Universit\'e de Provence,
39 rue F. Joliot-Curie, F-13453 Marseille Cedex 13, France, \email{gun@cmi.univ-mrs.fr}
\and G.\ Maillard \at CMI-LATP, Universit\'e de Provence,
39 rue F. Joliot-Curie, F-13453 Marseille Cedex 13, France, \email{maillard@cmi.univ-mrs.fr},
and EURANDOM, P.O.\ Box 513, 5600 MB Eindhoven, The Netherlands}
%
%
\maketitle

\abstract*{}

\abstract{We consider the parabolic Anderson model (PAM) which is given by the equation 
$\partial u/\partial t = \kappa\Delta u + \xi u$ with $u\colon\, \Z^d\times [0,\infty)\to \R$, 
where $\kappa \in [0,\infty)$ is the diffusion constant, $\Delta$ is the discrete Laplacian, 
and $\xi\colon\,\Z^d\times [0,\infty)\to\R$ is a space-time random environment. The solution 
of this equation describes the evolution of a ``reactant'' $u$ under the influence 
of a ``catalyst'' $\xi$.
\newline\indent
In the present paper we focus on the case where $\xi$ is a system of $n$ independent
simple random walks each with step rate $2d\rho$ and starting from the origin. We 
study the \emph{annealed} Lyapunov exponents, i.e., the exponential growth rates of 
the successive moments of $u$ w.r.t.\ $\xi$ and show that these exponents, as a function
of the diffusion constant $\kappa$ and the rate constant $\rho$, behave
differently depending on the dimension $d$.
In particular, we give a description of the intermittent behavior of the system
in terms of the annealed Lyapunov exponents, depicting how the total mass of $u$
concentrates as $t\to\infty$.
Our results are both a generalization and an extension of the work of G\"artner and
Heydenreich \cite{garhey06}, where only the case $n=1$ was investigated.}


\section{Introduction}
\label{S1}


\subsection{Model}
\label{S1.1}

The parabolic Anderson model (PAM) is the partial differential equation
\begin{equation}
\label{pA}
\left\{
\begin{array}{rll}
\displaystyle\frac{\partial}{\partial t}u(x,t) &=& \kappa\Delta u(x,t) + \xi(x,t)u(x,t),\bigg.\\
u(x,0) &=& 1,
\end{array}
\qquad x\in\Z^d,\,t\geq 0\, .
\right.
\end{equation}
Here, the $u$-field is $\R$-valued, $\kappa\in [0,\infty)$ is the diffusion
constant, $\Delta$ is the discrete Laplacian acting on $u$ as
\begin{equation*}
\label{}
\Delta u(x,t) = \sum_{{y\in\Z^d} \atop {y\sim x}} [u(y,t)-u(x,t)]
\end{equation*}
($y\sim x$ meaning that $y$ is nearest neighbor of $x$), and
\begin{equation*}
\label{}
\xi = (\xi_t)_{t \geq 0} \quad\text{with}\quad \xi_t = \{\xi(x,t) \colon\,x\in\Z^d\}
\end{equation*}
is an $\R$-valued random field that evolves with 
time and that drives the equation.

One interpretation of (\ref{pA}) comes from population dynamics by considering
a system of two types of particles $A$ and $B$. $A$-particles represent ``catalysts'', 
$B$-particles represent ``reactants'' and the dynamics is  subject to the following rules:
\begin{itemize}
\item
$A$-particles evolve independently of $B$-particles according to a prescribed 
dynamics with $\xi(x,t)$ denoting the number of $A$-particles at site $x$ at time $t$;
\item
$B$-particles perform independent simple random walks at rate $2d\kappa$ and split
into two at a rate that is equal to the number of $A$-particles present at
the same location;
\item
the initial configuration of $B$-particles is that there is exactly one particle at each 
lattice site.
\end{itemize}
Then, under the above rules, $u(x,t)$ represents the average number of 
$B$-particles at site $x$ at time $t$ conditioned on the evolution of the 
$A$-particles. 

It is possible to add that $B$-particles die at rate $\delta\in [0,\infty)$.
This leads to the trivial transformation $u(x,t) \to u(x,t)e^{\delta t}$. 
We will hereafter assume that $\delta=0$.
It is also possible to add a coupling constant $\gamma\in (0,\infty)$
in front of the $\xi$-term in (\ref{pA}), but this can be reduced to 
$\gamma=1$ by a scaling argument.

In what follows, we focus on the case where
\begin{equation}
\label{xidef}
\xi(x,t)=\sum_{k=1}^{n}\delta_{x}\big(Y_{k}^{\rho}(t)\big)
\end{equation}
with $\{Y_k^\rho:\;1\leq k \leq n\}$ a family of $n$ independent simple random walks, 
where for each  $k\in\{1,\ldots, n\}$, $Y_{k}^{\rho}=(Y_{k}^{\rho}(t))_{t\geq 0}$
is a simple random walk with step rate $2d\rho$ starting from the origin. 
We write $\PP_{0}^{\otimes n}$ and $\EE_{0}^{\otimes n}$ to denote respectively the 
law and the expectation of the family of $n$ independent simple random walks 
$\{Y_{k}^{\rho}\colon 1\leq k\leq n\}$ where initially all of the walkers are located at $0$.

The rest of the section is organized as follows. In Section \ref{S1.2}, we define the
\emph{annealed Lyapunov exponents\/} and introduce the \emph{intermittency\/} 
phenomenon. In Section \ref{S1.3}, we review some related models from the literature. 
In Section \ref{S1.4}, we state our main results, and finally, in Section \ref{S1.5}, we 
give some further comments and add few results and conjectures.


\subsection{Lyapunov exponents and intermittency}
\label{S1.2}

Our focus will be on the \emph{annealed} Lyapunov exponents that describe the 
exponential growth rate of the successive moments of the solution of (\ref{pA}). 

By the Feynman-Kac formula, the solution of (\ref{pA}) reads
\begin{equation}
\label{fey-kac1}
u(x,t) = \ES_{\,x}\left(\exp\left[\int_0^t \xi\left(X^\kappa(s),t-s\right)\,ds\right]\right),
\end{equation}
where $X^\kappa=(X^\kappa(t))_{t \geq 0}$ is the simple random walk on $\Z^d$ with 
step rate $2d\kappa$ and $\ES_{\,x}$ denotes expectation with respect to $X^\kappa$ 
given $X^\kappa(0)=x$. The connection between the parabolic Anderson equation 
(\ref{pA}) with random time-independent potential $\xi$ and the Feyman-Kac functional 
(\ref{fey-kac1}) is well understood (see e.g.\ G\"artner and Molchanov \cite{garmol90}) 
and can be easily extended to the time-dependent potential setting. Taking into account 
our choice of catalytic medium in (\ref{xidef}) we define $\Lambda_p(t)$ as
\begin{eqnarray}
\label{lyapdef2}
\Lambda_p(t)
&=&  \frac{1}{t} \log\EE_{\,0}^{\otimes n}\big([u(x,t)]^p\big)^{1/p}\nonumber\\
&=&  \frac{1}{pt} \log \big(\EE_{\,0}^{\otimes n}\otimes\ES_{\,x}^{\otimes p}\big)
\Bigg(\exp\Bigg[\sum_{j=1}^{p}\sum_{k=1}^{n}\int_0^t
\delta_{0}\big(X_{j}^{\kappa}(s)-Y_{k}^{\rho}(t-s)\big)\,ds
\Bigg]\Bigg),
\end{eqnarray}
where $\{X_{j}^\kappa\colon 1\leq j\leq p\}$ is a family of $p$ independent copies of 
$X^\kappa$ and $\ES_{\,x}^{\otimes p}$ stands for the expectation of this family 
with $X_{j}^{\kappa}(0)=x$
for all $j$.

If the last quantity admits a limit as $t\to\infty$ we define 
\begin{equation}
\label{aLyapdef}
\lambda_p := \lim_{t\to\infty} \Lambda_p(t)
\end{equation}
to be the $p$-th (annealed) Lyapunov exponent of the solution $u$ of the PAM (\ref{pA}).

We will see in Theorem
\ref{th1} that the limit in (\ref{aLyapdef}) exists and is independent of $x$. Hence, we 
suppress $x$ in the notation. However,  $\lambda_p$ is clearly a function of $n$, $d$, 
$\kappa$ and $\rho$. In what follows, our main focus will be to analyze the dependence 
of $\lambda_{p}$ on the parameters $n$, $p$, $\kappa$ and $\rho$, therefore we will 
often write $\lambda_p^{(n)}(\kappa,\rho)$.

In particular, our main subject of interest will be to draw the qualitative picture of 
{\it intermittency} for these systems. First, note that by the moment inequality we have 
\begin{equation}
\lambda_p^{(n)}\geq\lambda_{p-1}^{(n)},
\end{equation}
for all $p\in\N\setminus\{1\}$. 
The system (or the solution of the system) (\ref{pA}) is said to be
{\em $p$-intermittent\/} if the above inequality is strict, namely,
\begin{equation}
\label{p-int}
\lambda_p^{(n)}>\lambda_{p-1}^{(n)}.
\end{equation}
The system is \emph{fully intermittent} if (\ref{p-int}) holds for all 
$p\in\mathbb{N}\setminus\{1\}$. We will sometimes say that the system is 
\emph{partially intermittent} if it is $p$-intermittent for some $p\in\N\setminus\{1\}$. 

Also note that, using H\"older's inequality, $p$-intermittency implies
$q$-intermittency for all $q\geq p$ (see e.g. \cite{garhey06}, Lemma 3.1).
Thus, for any fixed $n\in\N$, $p$-intermittency in fact implies that
\begin{equation*}
\lambda_{q}^{(n)} > \lambda_{q-1}^{(n)}
\quad \forall q\geq p \, ,
\end{equation*}
and $2$-intermittency means full intermittency.

Geometrically, intermittency corresponds to the solution being \emph{asymptotically 
concentrated} on a thin set, which is expected to consist of ``islands'' located far from 
each other (see \cite{garkon05}, Section 1 and references therein for more details).
Here, due to the lack of ergodicity, such a geometric picture of intermittency is not
available. Nevertheless,  (\ref{p-int}) can still be interpreted as the $p$-th moment of
$u$ being generated by some exponentially rare event (see \cite{garhey06}, Section 
1.2 for a more detailed analysis).


\subsection{Literature}
\label{S1.3}

The behavior of the \emph{annealed} Lyapunov exponents and particularly the problem
of intermittency for the PAM in a \emph{space-time} random environment was subject to
various studies. Carmona and Molchanov \cite{carmol94} obtained an essentially complete
qualitative description of the annealed Lyapunov exponents and intermittency  when $\xi$ 
is white noise, i.e.,
\begin{equation*}
\label{}
\xi(x,t) = \frac{\partial}{\partial t}\,W(x,t) \, ,
\end{equation*}
where $W=(W_t)_{t \geq 0}$ with $W_t=\{W(x,t)\colon\,x\in\Z^d\}$ is a field of independent 
Brownian motions.  In particular, it was shown that $\lambda_{1}=1/2$ for all $d\geq 1$ and, 
$\lambda_p>1/2$ for $p\in\N\setminus\{1\}$ in $d=1,2$. It is also proved that for $d \geq 3$ 
there exist $0<\kappa_2\leq\kappa_3\leq\ldots$ 
satisfying
\begin{equation*}
\label{Lyaref}
\lambda_p(\kappa)-\frac12
\left\{
\begin{array}{lr}
>0,   &\text{for } \kappa \in \big[0,\kappa_p\big),\Big.\\
= 0,  &\text{for } \kappa \in \big[\kappa_p,\infty\big),
\end{array}
\quad p \in \N\setminus\{1\} \,.
\right.
\end{equation*}
Further refinements on the behavior of the Lyapunov exponents were 
obtained in Greven and den Hollander \cite{grehol07}.
Upper and lower bounds on $\kappa_p$ were derived, and the asymptotics of
$\kappa_p$ as $p \to \infty$ was computed. In addition, it was proved that the $\kappa_p$'s 
are distinct for $d$ large 
enough.

More recently various models where $\xi$ is \emph{non-Gaussian} were investigated.
Kesten and Sidoravicius \cite{kessid03}  and G\"artner and den Hollander \cite{garhol06}
 considered the case where $\xi$ is given by a Poisson field of independent simple 
random walks. In \cite{kessid03}, the survival versus extinction of the system is studied. 
In \cite{garhol06}, the moment asymptotics were studied and a partial picture of intermittency, 
depending on the parameters $d$ and $\kappa$, was obtained. The case where $\xi$ is a 
single random walk --corresponding to $n=1$ case in our setting-- was studied by G\"artner 
and Heydenreich \cite{garhey06}. Analogous results to those contained in Theorems \ref{th1}, 
\ref{th2} and Corollary \ref{th5}(i) were obtained. 

The investigation of annealed Lyapunov behavior and intermittency was extented to
non-Gaussian and \emph{space correlated} potentials in G\"artner, den Hollander and
Maillard, in \cite{garholmai07} and \cite{garholmai09}, for the case where $\xi$ is an 
exclusion process with symmetric random walk transition kernel, starting form a Bernoulli 
product measure. Later G\"artner, den Hollander and Maillard \cite{garholmai10}, and 
Maillard, Mountford and Sch\"opfer \cite{maimousch11}, studied the case where $\xi$ is a 
voter model starting either from Bernoulli product measure or from equilibrium  
(see G\"artner, den Hollander and Maillard \cite{garholmai08HvW}, for an overview).


\subsection{Main results}
\label{S1.5}

Our first theorem states that the Lyapunov exponents exist and behave nicely as 
a function of $\kappa$ and $\rho$. It will be proved in Section \ref{S2}.

\begin{theorem}[Existence and first properties]
\label{th1}
Let $d \geq 1$ and $n,p\in\N$.\\
(i) For all $\kappa,\rho\in[0,\infty)$, the limit in {\rm (\ref{aLyapdef})} exists, is finite,
and is independent of $x$ if $(\kappa,\rho) \ne (0,0)$. \\
(ii) On $[0,\infty)^{2}$, $(\kappa,\rho)\mapsto\lambda_p^{(n)}(\kappa,\rho)$
is continuous, convex and non-increasing in both $\kappa$ and $\rho$.
\end{theorem}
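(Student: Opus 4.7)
\emph{Plan for part (i).} Finiteness is immediate since the integrand in \refeq{lyapdef2} is bounded by $np$, giving $\Lambda_p(t)\in[0,n]$ uniformly in $t$. For existence, I would establish an almost super-multiplicativity
\begin{equation*}
\log M_p(t+s) \ge \log M_p(t) + \log M_p(s) - h(t,s),\qquad M_p(t):=e^{pt\Lambda_p(t)},
\end{equation*}
with $h(t,s)=O(\log(t+s))$. The key step is to split the time integral in \refeq{lyapdef2} at $u=t$: because of the reversal $Y_k^\rho(t+s-u)$, the two resulting pieces involve the $X$- and $Y$-walks on \emph{disjoint} time intervals, namely $X$ on $[0,t]$ paired with $Y$ on $[s,s+t]$ and $X$ on $[t,t+s]$ paired with $Y$ on $[0,s]$. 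Conditioning on the event $\{X_j^\kappa(t)=x,\,Y_k^\rho(s)=0\text{ for all }j,k\}$, whose probability decays only polynomially in $t,s$, and invoking the strong Markov property factors the expectation; up to the discrepancy between the resulting SRW bridges and free walks (controlled by standard local-CLT estimates), the two factors are $M_p(t)$ and $M_p(s)$. A Fekete-type lemma accommodating the logarithmic error then yields existence of $\lambda_p^{(n)}(\kappa,\rho)$. For the independence of $x$, I would write $X_j^\kappa$ started at $x$ as $x$ plus $X_j^\kappa$ started at $0$, which only replaces $\delta_0$ by $\delta_{-x}$ in \refeq{lyapdef2}; when $(\kappa,\rho)\ne(0,0)$, a short initial time window of length depending only on $x$ allows the walks to absorb this shift, contributing only a $t$-independent multiplicative factor that vanishes in the $t^{-1}\log$ limit.

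\emph{Plan for part (ii).} Here I would derive continuity, convexity, and monotonicity from a variational representation of $\lambda_p^{(n)}(\kappa,\rho)$ as a supremum of an affine-in-$(\kappa,\rho)$ family, of Donsker-Varadhan type:
\begin{equation*}
\lambda_p^{(n)}(\kappa,\rho)=\frac{1}{p}\sup_\pi\Big[\bra{V,\pi}-\kappa\,J^X(\pi)-\rho\,J^Y(\pi)\Big],
\end{equation*}
where $V$ is the potential appearing in the exponent of \refeq{lyapdef2}, the supremum ranges over probability measures $\pi$ on $(\Z^d)^{p+n}$, and $J^X,J^Y$ are the (non-negative) Dirichlet forms for the $X$- and $Y$-walks. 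Convexity is then automatic as a supremum of affine functions of $(\kappa,\rho)$; non-negativity of $J^X,J^Y$ yields monotone non-increase in each of $\kappa$ and $\rho$. Continuity on the open quadrant $(0,\infty)^2$ follows from convexity, and continuity at the boundary is obtained by combining the $t$-wise continuity of $\Lambda_p(t;\cdot,\cdot)$ (bounded dominated convergence applied to \refeq{lyapdef2}) with the uniform-on-compacts convergence provided by part (i).

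\emph{Main obstacle.} The principal difficulty lies in the super-multiplicativity step of part (i): the time reversal in $Y_k^\rho(t-s)$ prevents a clean Markov splitting of the joint $(X,Y)$-process at a single common instant and forces cutting the $X$- and $Y$-walks at different times, which necessitates a careful comparison between the resulting conditioned (bridge) and unconditioned (free) path measures. A secondary subtlety in part (ii) is justifying the variational formula in the presence of this time reversal, which I would handle by appealing to the equivalence in distribution of forward and time-reversed SRW with symmetric step rate.
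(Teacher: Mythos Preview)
Your plan for part (ii) is exactly what the paper does: the variational formula
\[
p\,\lambda_p^{(n)}(\kappa,\rho)=\sup_{\nor{f}_2=1}\Big\{-\kappa\nor{\nabla_x f}_2^2-\rho\nor{\nabla_y f}_2^2+\sum_{(x,y)}I_p(x,y)f^2(x,y)\Big\}
\]
immediately gives convexity and monotonicity; continuity then follows from convexity plus finiteness (lower semicontinuity as a sup of affine functions, upper semicontinuity from convexity).

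For part (i), however, the paper takes a different and much shorter route than your Fekete/super-multiplicativity scheme. The key observation---which you list only as a ``secondary subtlety'' for part (ii)---is to perform the time reversal on $Y$ \emph{once, at the outset}: since $(Y^\rho_k(t-s))_{0\le s\le t}$ has the same law as a walk run forward from $Y^\rho_k(t)$ and ending at $0$,
\[
\EE_0^{\otimes n}\!\big[u(x,t)^p\big]
=\sum_{z\in\Z^{dn}}\EE^{X,Y}_{x,z}\!\Big[\exp\Big(\int_0^t I_p(X_s,Y_s)\,ds\Big)\,\delta_0(Y_t)\Big].
\]
This recasts the $p$-th moment as $\bra{f_1,e^{t\cL_p}f_2}$ for the \emph{self-adjoint} operator $\cL_p=\kappa\Delta_x+\rho\Delta_y+I_p$ on $l^2(\Z^{d(p+n)})$, with $f_1,f_2$ compactly supported (after a harmless restriction to balls of radius $t\log t$). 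Existence of the limit and the variational formula then come out \emph{simultaneously}: the upper bound is just $\nor{e^{t\cL_p}}_{2,2}=e^{t\sup\Sp(\cL_p)}$, and the lower bound is obtained by bounding below by $\bra{\delta_0\otimes\delta_0,e^{t\cL_p}(\delta_0\otimes\delta_0)}$ and invoking the Donsker--Varadhan LDP for the occupation measure of the forward process $(X_s,Y_s)$. No super-multiplicativity, no bridge-to-free comparison, no Fekete lemma.

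Your approach can probably be pushed through, but the ``main obstacle'' you identify---cutting $X$ and $Y$ at different times and then comparing bridges with free walks---is exactly the complication that the paper's single time reversal eliminates. In short: promote the time reversal from afterthought to first move, and part (i) collapses into the same spectral computation that you already planned for part (ii).
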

Let $G_d(x)$ be the Green function at lattice site $x$ of simple random walk 
stepping at rate $2d$ and
\begin{equation}
\label{mudef}
\mu(\kappa)=\sup\Sp(\kappa\Delta+\delta_{0})
\end{equation}
be the supremum of the spectrum of the operator $\kappa\Delta+\delta_{0}$ in
$l^2(\Z^d)$. It is well-known that (see e.g.\ \cite{garhol06}, Lemma 1.3) 
$\Sp(\kappa\Delta+\delta_{0})=[-4d\kappa,0]\cup\{\mu(\kappa)\}$
with
\begin{equation}
\label{muprop}
\mu(\kappa)
\left\{
\begin{array}{lr}
=0, &\quad\text{if } \kappa\geq G_d(0),\\
>0, &\quad\text{if } \kappa<G_d(0).
\end{array}
\right.
\end{equation}
Furthermore, $\kappa\mapsto \mu(\kappa)$ is continuous, non-increasing and 
convex on $[0,\infty)$, and strictly decreasing on $[0,G_{d}(0)]$. 

The next theorem gives the limiting behavior of $\lambda_{p}^{(n)}$ as $\kappa\da 0$ 
and $\kappa\to\infty$, and describes a region of $\kappa$ where $\lambda_{p}^{(n)}=0$. 
Note that by symmetry, 
$\lambda_{p}^{(n)}(\kappa,\rho)=\frac{n}{p}\lambda_{n}^{(p)}(\rho,\kappa)$,
for all $n,p\in\N$ and $\kappa,\rho\in[0,\infty)$. Therefore, the $\kappa$-dependence 
described below can be transcribed in terms of $\rho$-dependence. 

\begin{theorem}[$\kappa$- $\rho$-dependence]
\label{th2}
Let $n,p\in\N$ and $\rho\in[0,\infty)$.\\
(i) For all $d\geq 1$,
$\lim_{\kappa\da 0}\lambda_{p}^{(n)}(\kappa,\rho)=\lambda_{p}^{(n)}(0,\rho)
=n\mu(\rho/p)$.\\
(ii) If $1\leq d\leq 2$, then $\lambda_p^{(n)}(\kappa,\rho)>0$  for all $\kappa\in[0,\infty)$.
Moreover, $\kappa\mapsto \lambda_p^{(n)}(\kappa,\rho)$ is strictly decreasing with
$\lim_{\kappa\to\infty}\lambda_p^{(n)}(\kappa,\rho)=0$ 
(see Fig.~{\rm\ref{fig-lambda01}}).\\
(iii) If  $d\geq 3$, then $\lambda_p^{(n)}(\kappa,\rho)=0$ 
for all $\kappa \in [nG_d(0),\infty)$ (see Fig.~{\rm\ref{fig-lambda02}}).
\end{theorem}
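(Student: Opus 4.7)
The plan is to prove the three parts by combining direct Feynman--Kac computations at the boundary cases ($\kappa=0$ and $\rho=0$) with the symmetry $\lambda_p^{(n)}(\kappa,\rho)=(n/p)\,\lambda_n^{(p)}(\rho,\kappa)$ and the continuity, convexity and monotonicity properties from Theorem~\ref{th1}(ii). A single additional ingredient is the identity $\lambda_1^{(1)}(\kappa,\rho)=\mu(\kappa+\rho)$ for the one-reactant one-catalyst case.

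For part~(i), at $\kappa=0$ every reactant walk $X_j^{\kappa}$ is frozen at its starting point; choosing $x=0$ and performing the change of variables $s\mapsto t-s$, the integrand in (\ref{lyapdef2}) collapses to $p\sum_{k=1}^n\int_0^t\delta_0(Y_k^{\rho}(s))\,ds$. Independence of the $Y_k^{\rho}$'s factorizes the expectation, and the standard Feynman--Kac spectral identity gives $(1/t)\log\EE_0[\exp(p\int_0^t\delta_0(Y^{\rho}(s))\,ds)]\to\sup\Sp(\rho\Delta+p\delta_0)=p\mu(\rho/p)$, the last equality via the scaling $\sup\Sp(\sigma\Delta+c\delta_0)=c\mu(\sigma/c)$. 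Hence $\lambda_p^{(n)}(0,\rho)=n\mu(\rho/p)$, and the stated continuity $\lim_{\kappa\da 0}\lambda_p^{(n)}(\kappa,\rho)=\lambda_p^{(n)}(0,\rho)$ is precisely Theorem~\ref{th1}(ii).

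For part~(ii) I use a three-step sandwich. \emph{Positivity.} The moment inequality gives $\lambda_p^{(n)}\geq\lambda_1^{(n)}$; dropping $n-1$ non-negative terms in the exponent yields $\lambda_1^{(n)}\geq\lambda_1^{(1)}$; and the identity $\lambda_1^{(1)}(\kappa,\rho)=\mu(\kappa+\rho)$ (obtained, as in \cite{garhey06}, by reducing the two-walk Feynman--Kac to a single random walk in the relative coordinate $X-Y$, which has step rate $2d(\kappa+\rho)$) is strictly positive in $d\in\{1,2\}$ since $G_d(0)=\infty$. \emph{Vanishing at infinity.} By Theorem~\ref{th1}(ii) one has $\lambda_p^{(n)}(\kappa,\rho)\leq\lambda_p^{(n)}(\kappa,0)$; at $\rho=0$ the catalysts are frozen at the origin so $\xi\equiv n\delta_0$ and a direct calculation gives $\lambda_p^{(n)}(\kappa,0)=\sup\Sp(\kappa\Delta+n\delta_0)=n\mu(\kappa/n)$; in $d\in\{1,2\}$ this tends to $0$ as $\kappa\to\infty$, a standard property of the top eigenvalue of $\kappa\Delta+\delta_0$ (the principal eigenfunction spreads out in $\ell^2$, forcing $|\phi_{\kappa}(0)|^2\to 0$). \emph{Strict monotonicity.} A convex, non-increasing, strictly positive function on $[0,\infty)$ with limit $0$ at infinity is necessarily strictly decreasing, since any flat piece would, by convexity combined with non-increasingness, extend all the way to $+\infty$, contradicting the vanishing at infinity.

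For part~(iii), the upper bound $\lambda_p^{(n)}(\kappa,\rho)\leq n\mu(\kappa/n)$ from part~(ii) is dimension-free, so for $d\geq 3$ and $\kappa\geq nG_d(0)$ we have $\kappa/n\geq G_d(0)$, whence $\mu(\kappa/n)=0$ by (\ref{muprop}); together with the trivial lower bound $\lambda_p^{(n)}\geq 0$ (nonnegativity of the integrand in (\ref{lyapdef2})) this forces equality to $0$. The main delicacy throughout is the identity $\lambda_1^{(1)}(\kappa,\rho)=\mu(\kappa+\rho)$: because (\ref{lyapdef2}) contains $Y_k^{\rho}(t-s)$ rather than $Y_k^{\rho}(s)$, the functional is not immediately that of a time-homogeneous two-particle Feynman--Kac, and the reduction to the relative coordinate requires an explicit time-reversal argument exploiting the reversibility of simple random walk.
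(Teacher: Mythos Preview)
Your proof follows essentially the same strategy as the paper's: continuity plus the $\kappa=0$ Feynman--Kac computation for~(i); the chain $\lambda_p^{(n)}\ge\lambda_1^{(1)}=\mu(\kappa+\rho)$ for positivity, the upper bound $\lambda_p^{(n)}(\kappa,\rho)\le\lambda_p^{(n)}(\kappa,0)=n\mu(\kappa/n)$ for vanishing at infinity, and convexity for strict monotonicity in~(ii); and the same upper bound for~(iii). Two minor remarks. For positivity the paper uses the symmetry $\lambda_1^{(n)}(\kappa,\rho)=n\lambda_n^{(1)}(\rho,\kappa)\ge n\lambda_1^{(1)}(\rho,\kappa)=n\mu(\kappa+\rho)$, which gives the sharper factor $n$; your ``drop $n-1$ terms'' bound $\lambda_1^{(n)}\ge\lambda_1^{(1)}$ is weaker but suffices for strict positivity.

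The one point that is not yet a proof is your justification that $\mu(\kappa)\to 0$ in $d\in\{1,2\}$. The sentence ``the principal eigenfunction spreads out in $\ell^2$, forcing $|\phi_\kappa(0)|^2\to 0$'' is the right intuition but circular as written: to show the eigenfunction spreads out you need precisely an inequality controlling $f(0)^2$ (or $\|f\|_4$) by $\|f\|_2$ and $\|\nabla f\|_2$. The paper closes this by invoking the discrete Gagliardo--Nirenberg inequalities $\|f\|_\infty^2\le C\|f\|_2\|\nabla f\|_2$ ($d=1$) and $\|f\|_4^2\le C\|f\|_2\|\nabla f\|_2$ ($d=2$), which immediately give $-\kappa\|\nabla f\|_2^2+f(0)^2\le -\kappa\|\nabla f\|_2^2+C\|\nabla f\|_2\le C^2/(4\kappa)$ for $\|f\|_2=1$, hence $\mu(\kappa)\le C^2/(4\kappa)\to 0$. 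You should either cite this fact or insert the one-line optimization argument.
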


Our next result describes the limiting behavior of $\lambda_p^{(n)}$ as $p\to\infty$ and $n\to\infty$. 

\begin{theorem}[$n$- $p$-dependence]
\label{th3}
Let $d\geq 1$ and $\kappa, \rho\in[0,\infty)$.\\
(i) For all $n\in\N$, $\lim_{p\to\infty}\lambda_{p}^{(n)}(\kappa,\rho)=n\mu(\kappa/n)$
(see Fig.~{\rm\ref{fig-lambda01}--\ref{fig-lambda02}});\\
(ii) For all $p > \rho/G_d(0)$, $\lim_{n\to\infty}\lambda_{p}^{(n)}(\kappa,\rho)=+\infty$;\\
(iii) For all $p \le \rho/G_d(0)$ and $n\in\N$, $\lambda_{p}^{(n)}(\kappa,\rho)=0$.
\end{theorem}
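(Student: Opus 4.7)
\noindent\textit{Proof sketch.}
The plan is to reduce all three parts to Theorem~\ref{th2} via the symmetry relation $\lambda_p^{(n)}(\kappa,\rho)=(n/p)\lambda_n^{(p)}(\rho,\kappa)$ stated just before Theorem~\ref{th2}, combined with one direct Feynman-Kac lower bound for part~(i). For the upper bound in (i), applying Theorem~\ref{th2}(i) with $(p,n,\kappa,\rho)$ replaced by $(n,p,0,\kappa)$ gives $\lambda_n^{(p)}(0,\kappa)=p\mu(\kappa/n)$, and the symmetry then yields $\lambda_p^{(n)}(\kappa,0)=n\mu(\kappa/n)$; combined with monotonicity in $\rho$ from Theorem~\ref{th1}(ii), this produces the uniform bound $\lambda_p^{(n)}(\kappa,\rho)\le n\mu(\kappa/n)$ for every $p\in\N$.

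For the matching lower bound in (i), I would condition in (\ref{lyapdef2}) on the event $A_t$ that every catalyst $Y_k^\rho$ stays at the origin throughout $[0,t]$; it has probability $e^{-2dn\rho t}$, and on it $\xi(\cdot,s)\equiv n\delta_0$, so the double expectation factorises over the $p$ independent reactants:
\begin{equation*}
\EE_0^{\otimes n}\otimes\ES_0^{\otimes p}\Bigl[\exp\Bigl(\sum_{j=1}^{p}\sum_{k=1}^{n}\int_0^t\delta_0\bigl(X_j^\kappa(s)-Y_k^\rho(t-s)\bigr)\,ds\Bigr)\Bigr]
\;\ge\;e^{-2dn\rho t}\,V(t)^{p},
\end{equation*}
where $V(t):=\ES_0[\exp(n\int_0^t\delta_0(X^\kappa(s))\,ds)]$ is the value at $(0,t)$ of the Feynman-Kac solution of the time-homogeneous PAM $\partial_t v=\kappa\Delta v+n\delta_0\,v$. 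Its exponential rate is $\sup\Sp(\kappa\Delta+n\delta_0)=n\mu(\kappa/n)$ by the scaling of $\mu$; taking logarithms, dividing by $pt$ and letting $t\to\infty$ gives $\lambda_p^{(n)}(\kappa,\rho)\ge n\mu(\kappa/n)-2dn\rho/p$, and sending $p\to\infty$ closes (i).

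For (iii), the hypothesis $p\le\rho/G_d(0)$ forces $d\ge 3$ and rewrites as $\rho\ge pG_d(0)$, so Theorem~\ref{th2}(iii) applied to $\lambda_n^{(p)}(\rho,\kappa)$ (with $\rho$ and $p$ now in the roles of $\kappa$ and $n$) gives $\lambda_n^{(p)}(\rho,\kappa)=0$, and the symmetry forces $\lambda_p^{(n)}(\kappa,\rho)=0$. For (ii), I would re-apply the just-proved part~(i) with the same swap to get $\lim_{n\to\infty}\lambda_n^{(p)}(\rho,\kappa)=p\mu(\rho/p)$; when $p>\rho/G_d(0)$, the characterisation (\ref{muprop}) makes this limit strictly positive, so for $n$ large enough $\lambda_n^{(p)}(\rho,\kappa)\ge\tfrac12 p\mu(\rho/p)$, whence $\lambda_p^{(n)}(\kappa,\rho)=(n/p)\lambda_n^{(p)}(\rho,\kappa)\ge(n/2)\mu(\rho/p)\to\infty$.

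The only genuinely delicate step is the lower bound in (i): the catalyst-freezing event $A_t$ is exponentially rare, and one needs the reactant factor $V(t)^{p}$ to contribute at rate $pn\mu(\kappa/n)$, i.e.\ the spectral identification $\liminf_{t\to\infty}t^{-1}\log V(t)\ge n\mu(\kappa/n)$. This is a standard Feynman-Kac/principal-eigenvalue asymptotic, and it degenerates harmlessly in the regime $\kappa\ge nG_d(0)$ where $\mu(\kappa/n)=0$. Everything else is bookkeeping built on Theorems~\ref{th1} and~\ref{th2}.
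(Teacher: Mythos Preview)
Your proposal is correct. Parts (ii) and (iii) match the paper's proof essentially verbatim: the paper also invokes the symmetry $\lambda_p^{(n)}(\kappa,\rho)=\tfrac{n}{p}\lambda_n^{(p)}(\rho,\kappa)$, applies part (i) in the swapped variables to get $\lim_{n\to\infty}\lambda_n^{(p)}(\rho,\kappa)=p\mu(\rho/p)>0$ for (ii), and reads (iii) off Theorem~\ref{th2}(iii).

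The genuine difference is in the lower bound for (i). The paper works entirely through the variational formula \eqref{formvar}: taking $f$ near the supremum for $\lambda_p^{(n)}(\kappa,0)$ and using the spectral bound $\nor{\nabla_y f}_2^2\le 4dn$ (bottom of the spectrum of $\Delta$ on $l^2(\Z^{dn})$), one obtains the sandwich
\[
\lambda_p^{(n)}(\kappa,0)-\frac{4dn\rho}{p}\le \lambda_p^{(n)}(\kappa,\rho)\le \lambda_p^{(n)}(\kappa,0)=n\mu(\kappa/n).
\]
Your probabilistic route---freezing the catalysts at the origin at cost $e^{-2dn\rho t}$ and factorising over the $p$ reactants---yields the sharper constant $2dn\rho/p$ in place of $4dn\rho/p$, and avoids any appeal to the variational representation. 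Both arguments produce a sandwich of width $O(1/p)$, which is all that is needed. Your approach is more elementary (it does not require Proposition~\eqref{formvar}), while the paper's spectral comparison has the advantage of giving the two-sided estimate \eqref{lambdabds} uniformly, which is later reused in the proof of Theorem~\ref{th4}(i). One small point you might make explicit: the identification $\liminf_{t\to\infty}t^{-1}\log V(t)\ge n\mu(\kappa/n)$ follows cleanly from $V(t)\ge\langle\delta_0,e^{t(\kappa\Delta+n\delta_0)}\delta_0\rangle$ and the spectral theorem, since $\mathbf{1}\notin l^2(\Z^d)$.
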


By part (ii) of Theorem \ref{th1}, $\lambda_p^{(n)}(\kappa,\rho)$ is non-increasing 
in $\kappa$. Hence, we can define $\big\{\kappa_{p}^{(n)}(\rho) \colon p\in\N\big\}$ 
as the non-decreasing sequence of critical $\kappa$'s for which
\begin{equation}
\label{kappa-crit}
\lambda_p^{(n)}(\kappa,\rho)
\left\{
\begin{array}{lr}
>0,   &\text{for } \kappa \in \big[0,\kappa_p^{(n)}(\rho)\big),\\
= 0,  &\text{for } \kappa \in \big[\kappa_p^{(n)}(\rho),\infty\big),
\end{array}
\quad p \in \N \, .
\right.
\end{equation}
As a consequence of Theorems \ref{th1} and \ref{th2} we have,
\begin{equation}
\begin{cases}
\;\kappa_{p}^{(n)}(\rho)=\infty, &\text{if } 1\leq d\leq2,\\
\;0<\kappa_{p}^{(n)}(\rho)<\infty, &\text{if } d\geq 3 \text{ and } 
p > \rho/G_d(0), \\
\;\kappa_{p}^{(n)}(\rho) = 0, &\text{if } d\geq 3 \text{ and } 
p \le \rho/G_d(0).
\end{cases}
\end{equation}
Our fourth theorem, which gives bounds on $\kappa_{p}^{(n)}(\rho)$ for $d \ge 3$, 
will be proved in
Section \ref{S4}. For this theorem we need to define the inverse of the function 
$\mu(\kappa)$. Note that by (\ref{mudef}) and (\ref{muprop}) we have $\mu(0)=1$ 
and $\mu(G_d(0))=0$. It is easy to see that $\mu(\kappa)$ restricted to the domain 
$[0,G_d(0)]$ is invertible with an inverse function $\mu^{-1}\colon [0,1]\to [0,G_d(0)]$. 
We extend $\mu^{-1}$ to $[0,\infty)$ by declaring $\mu^{-1}(t)=0$ for $t>1$. 
Denote
\begin{equation}
\alpha_d = \frac{G_d(0)}{2d \|G_d\|_2^2} \in [0,\infty) \, ,
\end{equation}
where  $\|G_d\|_2$ is the $l_2$ norm of $G_d$. Since $\|G_d\|_2 < \infty$ if and only if $d\ge 5$,
$\alpha_d = 0$ for $d \in \acc{3,4}$. 
\begin{theorem}[Critical $\kappa$'s]
\label{th4}
Let $n, p\in\N$.\\
(i) If $d\geq 3$, then
 $\rho \in[0,\infty) \mapsto \kappa_{p}^{(n)}(\rho)$
is a continuous, non-increasing and convex function such that 
\begin{equation}
\label{kappabds}
\max\bigg(\frac{n}{4d}\, \mu(\rho/p), n \mu^{-1}(4d\rho/p) \bigg)
\leq \kappa_{p}^{(n)}(\rho) \leq 
nG_d(0)  \bigg(1-\frac{\rho}{pG_d(0)}\bigg)_+. 
\end{equation}
\\
(ii) If $d\ge 5$, then
\begin{equation}
\label{kappabds2}
 \kappa_{p}^{(n)}(\rho)  \geq \bigg(n G_d(0) - \rho \frac{n}{p \alpha_d}\bigg)_{+} . 
\end{equation}
(iii) If $d\ge 5$ and $p\in\N\setminus\{1\}$ are such that $\alpha_d > \frac{p-1}{p}$, 
then
\begin{equation}
\label{kappa_croit}
\kappa_{p-1}^{(n)}(\rho) < \kappa_{p}^{(n)}(\rho)
\quad \forall \rho \in (0, p G_d(0)).
\end{equation} 
\end{theorem}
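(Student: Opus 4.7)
For part (i), I would begin with the structural properties. By Theorem \ref{th1}, $\lambda_p^{(n)}$ is continuous, convex, and non-increasing in each of $\kappa,\rho$ on $[0,\infty)^2$, and it is non-negative (the integrand in the Feynman--Kac representation is non-negative, so $u\ge 1$ and $\EE[u^p]\ge 1$). The zero set $Z := \{\lambda_p^{(n)} = 0\}$ is therefore a closed convex subset of $[0,\infty)^2$, upward-closed in each coordinate. Since the curve $\rho \mapsto \kappa_p^{(n)}(\rho)$ parametrises the lower-right boundary of $Z$, its convexity and monotonicity follow directly from those of $Z$, while continuity on $(0,\infty)$ comes from convexity and at $\rho = 0$ from the matching bounds established below. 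The upper bound in \refeq{kappabds} is then obtained by convex interpolation between the two endpoints: at $\rho = 0$ the catalysts are frozen and $\lambda_p^{(n)}(\kappa,0) = n\mu(\kappa/n)$, so $\kappa_p^{(n)}(0) = nG_d(0)$, while at $\rho = pG_d(0)$ Theorem \ref{th3}(iii) forces $\kappa_p^{(n)}(\rho) = 0$.

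For the two lower bounds in \refeq{kappabds} I would use a pinning argument. Restricting the $n$ catalysts to remain at the origin throughout $[0,t]$ --- an event of probability $e^{-2dn\rho t}$ --- replaces $\xi$ by the frozen profile $n\delta_0$, and yields
\begin{equation*}
\lambda_p^{(n)}(\kappa,\rho) \ge n\mu(\kappa/n) - \frac{2dn\rho}{p},
\end{equation*}
which is positive as soon as $\kappa < n\mu^{-1}(2d\rho/p)$, and a fortiori for $\kappa < n\mu^{-1}(4d\rho/p)$ since $\mu^{-1}$ is non-increasing. The companion bound $\frac{n}{4d}\mu(\rho/p)$ comes from applying the same pinning estimate in the dual picture, using the symmetry $\lambda_p^{(n)}(\kappa,\rho) = (n/p)\lambda_n^{(p)}(\rho,\kappa)$ and freezing the $p$ reactants instead.

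For part (ii), I would refine the estimate of $\mu$ through the Rayleigh characterisation $\mu(\kappa) = \sup_{\phi \in \ell^2(\Z^d)} [\phi(0)^2 - \kappa \bra{\phi,-\Delta\phi}]/\nor{\phi}_2^2$. For $d \ge 5$ the Green function $G_d$ is in $\ell^2$, and the identity $-\Delta G_d = \delta_0$ gives $\bra{G_d, -\Delta G_d} = G_d(0)$. Choosing $\phi = G_d$ yields the linear estimate $\mu(\kappa) \ge 2d\alpha_d(G_d(0) - \kappa)_+$. Inserting this into the pinning bound from part (i) gives
\begin{equation*}
\lambda_p^{(n)}(\kappa,\rho) \ge 2d\alpha_d(nG_d(0) - \kappa) - \frac{2dn\rho}{p},
\end{equation*}
which is positive precisely when $\kappa < nG_d(0) - n\rho/(p\alpha_d)$, establishing \refeq{kappabds2}.

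For part (iii), I would compare directly the refined lower bound from (ii) on $\kappa_p^{(n)}(\rho)$ with the linear upper bound from (i) on $\kappa_{p-1}^{(n)}(\rho)$. On $\rho \in (0,(p-1)G_d(0))$ both bounds are positive and linear, and the strict inequality $nG_d(0) - n\rho/(p\alpha_d) > nG_d(0) - n\rho/(p-1)$ is equivalent to $\alpha_d > (p-1)/p$, precisely the hypothesis. On the remaining range $\rho \in [(p-1)G_d(0), pG_d(0))$, Theorem \ref{th3}(iii) forces $\kappa_{p-1}^{(n)}(\rho) = 0$, while the bound $\frac{n}{4d}\mu(\rho/p)$ from (i) is positive on all of $(0,pG_d(0))$, so $\kappa_p^{(n)}(\rho) > 0$. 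The main obstacle I anticipate is ensuring that the pinning argument produces a sharp $t \to \infty$ lower bound at the level of the limiting Lyapunov exponent, with careful control of the polynomial prefactors in the Feynman--Kac semigroup; the variational step (ii) is then essentially routine once one has $G_d \in \ell^2$, which is exactly the $d \ge 5$ hypothesis.
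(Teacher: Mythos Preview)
Your proposal is correct and arrives at the stated bounds, but your route differs from the paper's in two substantive ways.

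First, for the structural properties in (i) the paper derives an explicit variational formula
\[
\kappa^{(n)}_p(\rho)=\sup_{\nor{f}_2=1}\frac{\sum_{x,y} I_p(x,y) f^2(x,y)-\rho\nor{\nabla_y f}_2^2}{\nor{\nabla_x f}_2^2},
\]
from which convexity, monotonicity and lower semicontinuity in $\rho$ are immediate (supremum of affine functions). You instead read these off from the convexity of the zero set $Z=\{\lambda_p^{(n)}=0\}$; this is valid but bypasses the variational formula entirely.

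Second, and more interestingly, the paper's lower bounds all go through the spectral inequality $\lambda_p^{(n)}(\kappa,\rho)\ge\lambda_p^{(n)}(\kappa,0)-4dn\rho/p$ (obtained by controlling $\nor{\nabla_y f}_2^2\le 4dn$ in \refeq{formvar}), while you use the probabilistic pinning estimate $\lambda_p^{(n)}(\kappa,\rho)\ge n\mu(\kappa/n)-2dn\rho/p$. Your constant is genuinely sharper, and this matters: for (ii) the paper plugs the test function $f_0(x,y)=\prod G_d(x_i)\prod\delta_0(y_j)$ directly into the variational formula for $\kappa_p^{(n)}(\rho)$ above, whereas you first bound $\mu(\kappa)\ge 2d\alpha_d(G_d(0)-\kappa)_+$ via the Rayleigh quotient with $\phi=G_d$ and then feed this into your pinning inequality. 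Your composition lands exactly on \refeq{kappabds2} only because the pinning constant is $2d$; the paper's $4d$ would have given a bound weaker by a factor of two. So your argument is more elementary and slightly stronger for (i), but for (ii) it relies on the two steps matching precisely, while the paper's single test-function insertion into \refeq{formvarkappa} is more robust. Part (iii) is identical in both approaches.
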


Note that the condition $\alpha_d > \tfrac{p-1}{p}$ is always true if $d$ is large enough
by the following lemma, whose proof is given in the appendix.

\begin{lemma}
\label{lm}
\label{alpha}
If $d \ge 3$, then $\alpha_d\le 1$ and $ \lim_{d \rightarrow \infty} \alpha_d = 1$.
\end{lemma}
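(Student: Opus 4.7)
The plan is to reduce $\alpha_d$ to a ratio of quantities associated with the discrete-time simple random walk (SRW) on $\Z^d$. Conditioning the rate-$2d$ continuous-time walker on its jump epochs gives $G_d(0,x) = G^{(\mathrm{disc})}(0,x)/(2d)$, where $G^{(\mathrm{disc})}$ is the discrete Green function, so that $\alpha_d = G^{(\mathrm{disc})}(0)/\|G^{(\mathrm{disc})}\|_2^2$. A Chapman-Kolmogorov computation yields the key identity
\[
\|G^{(\mathrm{disc})}\|_2^2 = \sum_{n,m\ge 0} p_{n+m}(0,0) = G^{(\mathrm{disc})}(0) + \sum_{N\ge 1} N q_N,
\]
where $q_N := p_N(0,0)$. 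The inequality $\alpha_d \le 1$ follows at once since $\sum_N N q_N \ge 0$, with the case $d \in \{3,4\}$ being trivial as $\alpha_d = 0$ by definition.

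For $\alpha_d \to 1$, since $G^{(\mathrm{disc})}(0) \ge 1$, it suffices to prove $\sum_{N\ge 1} N q_N(d) \to 0$ as $d\to\infty$. Using the Fourier representation $q_N = \EE[D(K)^N]$ with $K$ uniform on $[-\pi,\pi]^d$ and $D(k) = \frac{1}{d}\sum_{j=1}^d \cos k_j$, the symmetry $k\mapsto \pi - k$ (which maps $D\mapsto -D$) applied to $\frac{1}{(2\pi)^d}\int D(1-D)^{-2}\,dk$ yields
\[
\sum_{N\ge 1} N q_N = 2\,\EE\!\left[\frac{D^2}{(1-D^2)^2}\right].
\]
I then split at $D^2 = 1/2$: on $\{D^2 \le 1/2\}$ the integrand is at most $4 D^2$, so the contribution is bounded by $4\EE[D^2] = 2/d$, which tends to $0$; on $\{D^2 > 1/2\}$, Hoeffding applied to the iid bounded summands in $D = \frac{1}{d}\sum_j \cos K_j$ gives $\PP(D^2 > 1/2) \le 2 e^{-d/4}$, and Cauchy-Schwarz combined with a uniform bound on $\EE[(1-D^2)^{-4}]$ shows this contribution is $O(e^{-d/8})$.

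The main obstacle is the uniform-in-$d$ control of the high moment $\EE[(1-D^2)^{-4}]$, whose integrand is singular at $k=0$ where $1-D\sim |k|^2/(2d)$. I expect this to follow from the series expansion $\EE[(1-D^2)^{-4}] = \sum_{k\ge 0}\binom{k+3}{3} q_{2k}(d)$ combined with two complementary bounds on $q_{2k}(d)$: the combinatorial estimate $q_{2k}(d) \le (2k-1)!!/(2d)^k$ (obtained by counting closed paths of length $2k$ in $\Z^d$ and using $\prod_j(n_j!)^2 \ge \prod_j n_j!$), which controls the sum for $k \le d/2$ by making the ratio of consecutive terms less than $1/2$, and the local central limit bound $q_{2k}(d) \le C(d/k)^{d/2}$, which handles the tail $k > d/2$ and contributes exponentially small in $d$.
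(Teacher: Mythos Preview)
Your route is genuinely different from the paper's. The paper writes
$\alpha_d = \EE[\bar S_d^{-1}]/\EE[\bar S_d^{-2}]$ with
$\bar S_d = \frac{1}{d}\sum_i(1-\cos\Theta_i)$, gets $\alpha_d\le 1$ from
H\"older--Jensen, and proves the limit by the law of large numbers for
$\bar S_d$ together with uniform integrability of $\bar S_d^{-2}$, the
latter established by a direct (and fairly intricate) estimate of
$\EE[\bar S_d^{-p}]$ that splits on the number of indices with small
$\Theta_i$. Your Chapman--Kolmogorov identity
$\|G^{(\mathrm{disc})}\|_2^2 = G^{(\mathrm{disc})}(0)+\sum_{N\ge 1}Nq_N$
makes $\alpha_d\le 1$ immediate, which is cleaner than the paper's
argument; and your reduction of $\alpha_d\to 1$ to $\sum_N Nq_N\to 0$
together with the symmetrization
$\sum_N Nq_N = 2\,\EE[D^2/(1-D^2)^2]$ and the Hoeffding split at
$D^2=1/2$ is a nice probabilistic alternative. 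Interestingly, the hard
residual step---your uniform bound on $\EE[(1-D^2)^{-4}]$---is, after
Cauchy--Schwarz and the symmetry $D\mapsto -D$, bounded by
$\EE[(1-D)^{-8}]=\EE[\bar S_d^{-8}]$, which is exactly the quantity the
paper controls; so the two approaches meet at the same technical lemma,
for which you and the paper propose different proofs.

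There is, however, a quantitative error in your sketch of that lemma.
With the cutoff at $k=d/2$ and the local-CLT bound
$q_{2k}\le C(d/k)^{d/2}$ (universal $C$), the tail
$\sum_{k>d/2}\binom{k+3}{3}q_{2k}$ is \emph{not} small: bounding crudely,
\[
\sum_{k>d/2}k^3\,C\Big(\frac{d}{k}\Big)^{d/2}
= C\,d^{d/2}\sum_{k>d/2}k^{\,3-d/2}
\;\asymp\; C\,d^{d/2}\,\frac{(d/2)^{4-d/2}}{d/2-4}
\;\asymp\; \frac{C\,d^4}{d}\,2^{d/2},
\]
which blows up, so the claimed ``exponentially small in $d$'' fails. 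The
fix is simply to move the cutoff to $cd$ with $c>1$ (say $c=2$): the
combinatorial bound $(2k-1)!!/(2d)^k\sim\sqrt{2}\,(k/(ed))^k$ remains
effective up to $k\le 2d$ since $2/e<1$, giving a uniformly bounded sum
over $k\le 2d$, while the same tail computation with lower limit $2d$
now produces a factor $(1/2)^{d/2}$ instead of $2^{d/2}$ and indeed
vanishes. Alternatively---and more simply---you can bypass the
fourth-moment detour entirely and bound
$\sum_{k\ge 1}kq_{2k}$ directly with the same two-range decomposition;
the weight $k$ instead of $k^3$ does not change the need for the cutoff
above $d$, but it avoids the Cauchy--Schwarz step.
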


As a consequence of the previous statements, our next result gives some general 
intermittency properties for all dimensions, and describes several regimes in the 
intermittent behavior of the system. 

\begin{corollary}[Intermittency]
\label{th5}
Let $n\in\N$.\\
(i) If $d\geq 1$, then (see Fig.~{\rm\ref{fig-lambda02}})
\begin{itemize}
\item[-]
for $\kappa\in [0, nG_d(0))$ there exists $p\geq 2$ such that the system is 
 $p$-intermittent;
 \item[-]
for $\kappa\in [nG_d(0),\infty)$ the system is not intermittent.
\end{itemize}
(ii) Fix $p\in\N\setminus\{1\}$. If $d$ is large enough (such that $\alpha_d > (p-1)/p$ ), 
then for all $q\in\{2,\ldots,p\}$, $\rho\in[0, q G_d(0))$ and 
$\kappa\in(\kappa_{q-1}^{(n)}(\rho),\kappa_{q}^{(n)}(\rho))$, the system is $q$-intermittent
(see Fig.~{\rm\ref{fig-lambda03}}).
\end{corollary}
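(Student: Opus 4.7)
The plan is to deduce each claim from the threshold framework \refeq{kappa-crit} together with the monotonicity, convexity, and limit statements of Theorems \ref{th1}--\ref{th4}. For part (ii), the hypothesis $\alpha_d > (p-1)/p$ and $q \leq p$ give $\alpha_d > (q-1)/q$, so Theorem \ref{th4}(iii) (applied with $p$ replaced by $q$) yields $\kappa_{q-1}^{(n)}(\rho) < \kappa_q^{(n)}(\rho)$ for every $\rho \in (0, qG_d(0))$. The interval $(\kappa_{q-1}^{(n)}(\rho),\kappa_q^{(n)}(\rho))$ is therefore nonempty (at $\rho=0$ it degenerates to the empty set, so the claim is vacuously true there). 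By \refeq{kappa-crit}, any $\kappa$ in this interval satisfies $\lambda_{q-1}^{(n)}(\kappa,\rho) = 0$ and $\lambda_q^{(n)}(\kappa,\rho) > 0$, which is exactly $q$-intermittency.

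For the second bullet of part (i), fix $\kappa \geq nG_d(0)$: then $\mu(\kappa/n) = 0$ by \refeq{muprop}, and Theorem \ref{th3}(i) gives $\lim_{p\to\infty}\lambda_p^{(n)}(\kappa,\rho) = 0$. The sequence $p \mapsto \lambda_p^{(n)}$ is non-decreasing (H\"older) and non-negative (because $\xi \geq 0$ forces $u \geq 1$), so every term vanishes and \refeq{p-int} is ruled out for every $p \geq 2$.

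For the first bullet of part (i), fix $\kappa \in [0, nG_d(0))$; Theorem \ref{th3}(i) now gives $\lim_{p\to\infty}\lambda_p^{(n)}(\kappa,\rho) = n\mu(\kappa/n) > 0$. The claim reduces to the strict inequality $\lambda_1^{(n)}(\kappa,\rho) < n\mu(\kappa/n)$, for then the non-decreasing sequence cannot stay constant at $\lambda_1^{(n)}$ and \refeq{p-int} must fire for some $p \geq 2$. (The case $\rho = 0$ lies outside the intermittent regime, since $\xi$ is deterministic and all Lyapunov exponents equal $n\mu(\kappa/n)$.) I would argue this strict inequality by contradiction: suppose $\lambda_1^{(n)}(\kappa,\rho) = n\mu(\kappa/n)$ for some $\rho > 0$. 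By Theorem \ref{th1}(ii) the map $\rho' \mapsto \lambda_1^{(n)}(\kappa,\rho')$ is continuous, convex and non-increasing, and by Theorem \ref{th2}(i) combined with the symmetry $\lambda_p^{(n)}(\kappa,\rho) = \tfrac{n}{p}\lambda_n^{(p)}(\rho,\kappa)$ its value at $\rho' = 0$ is exactly $n\mu(\kappa/n)$. A convex non-increasing function that attains its initial value at a positive point must be constant on $[0,\infty)$; but Theorem \ref{th3}(iii) forces $\lambda_1^{(n)}(\kappa,\rho') = 0$ as soon as $\rho' \geq G_d(0)$, incompatible with $n\mu(\kappa/n) > 0$.

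The main obstacle is precisely this strict bound $\lambda_1^{(n)} < n\mu(\kappa/n)$: one has to combine three structural inputs from Theorems \ref{th1}--\ref{th3} (convexity and monotonicity in $\rho$, the explicit boundary value at $\rho=0$, and the vanishing region $\rho \geq G_d(0)$) within a single contradiction argument. Once this is in place, the rest of the corollary reduces to straightforward use of the threshold framework \refeq{kappa-crit} and the non-decreasing nature of $p \mapsto \lambda_p^{(n)}$.
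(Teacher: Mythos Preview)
Your proposal is correct and follows essentially the same route as the paper's own proof: both reduce the first bullet of (i) to the strict inequality $\lambda_1^{(n)}(\kappa,\rho) < n\mu(\kappa/n)$ and obtain it by the same contradiction (convexity and monotonicity in $\rho$, the boundary value $\lambda_1^{(n)}(\kappa,0)=n\mu(\kappa/n)$, and vanishing for $\rho\ge G_d(0)$), and both derive (ii) directly from Theorem~\ref{th4}(iii) together with the threshold definition~\refeq{kappa-crit}. Your explicit handling of the degenerate case $\rho=0$ is a small addition the paper leaves implicit.
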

Note that since $G_d(0)=\infty$ for $d=1,2$ Corollary \ref{th5}(i) implies that for 
dimensions $1$ and $2$ the system is always $p$-intermittent for some $p$. Some other 
partial results about intermittency are given in section \ref{S1.4} (see also figures).

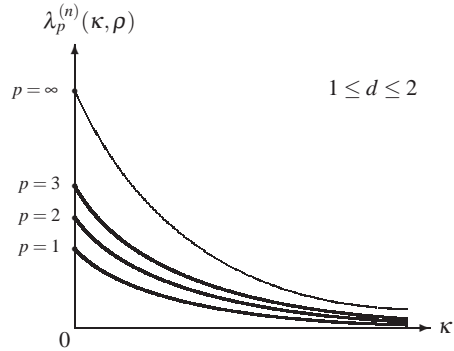
\begin{figure}[ht]
\vspace{3cm}
\begin{center}
\setlength{\unitlength}{0.21cm}
\begin{picture}(0,12)(16,0)
\put(0,2){\vector(1,0){22.5}}
\put(0,2){\vector(0,1){18}}
{\thicklines
\qbezier(0,7)(4,2.6)(21,2.2)
}
{\thicklines
\qbezier(0,9)(4,3.2)(21,2.4)
}
{\thicklines
\qbezier(0,11)(4,3.6)(21,2.6)
}
\qbezier[200](0,17)(6,3.6)(21,3.2)
\put(-1,0.8){$0$}
\put(23,1.7){$\kappa$}
\put(16,16.8){$1\leq d\leq 2$}
\put(-2,21){$\lambda_p^{(n)}(\kappa,\rho)$}
\put(-3.6,6.8){{\scriptsize$p=1$}}
\put(-3.6,8.8){{\scriptsize$p=2$}}
\put(-3.6,10.8){{\scriptsize$p=3$}}
\put(-4,16.8){{\scriptsize$p=\infty$}}
\put(0,7){\circle*{.35}}
\put(0,9){\circle*{.35}}
\put(0,11){\circle*{.35}}
\put(0,17){\circle*{.35}}
\end{picture}
\caption{\small For $1\leq d\leq 2$, the system is partially intermittent. 
Full intermittency is conjectured, and proved for $n=1,2$.}
\label{fig-lambda01}
\end{center}
\end{figure}



\begin{figure}[ht]
\vspace{3cm}
\begin{center}
\setlength{\unitlength}{0.21cm}
\begin{picture}(0,12)(16,0)
\put(0,3.5){\vector(1,0){29}}
\put(0,3.5){\vector(0,1){18}}
{\thicklines
\qbezier(0,8.5)(2,4.1)(7,3.5)
}
{\thicklines
\qbezier(7,3.5)(12,3.5)(29,3.5)
}
{\thicklines
\qbezier(0,10.5)(3,4.1)(10,3.5)
}
{\thicklines
\qbezier(0,12.5)(4,4.1)(13,3.5)
}
\qbezier[200](0,18.5)(6,4.1)(21,3.5)
%
\put(-1,2.3){$0$}
\put(30,3.2){$\kappa$}
\put(-2,22.5){$\lambda_p^{(n)}(\kappa, \rho)$}
\put(18,18.3){$d\geq 3$, $\rho<G_d(0)$}
\put(-3.6,8.3){{\scriptsize$p=1$}}
\put(-3.6,10.3){{\scriptsize$p=2$}}
\put(-3.6,12.3){{\scriptsize$p=3$}}
\put(-4,18.3){{\scriptsize$p=\infty$}}
\put(0,8.5){\circle*{.35}}
\put(0,10.5){\circle*{.35}}
\put(0,12.5){\circle*{.35}}
\put(0,18.5){\circle*{.35}}
\put(7,3.5){\circle*{.35}}
\put(10,3.5){\circle*{.35}}
\put(13,3.5){\circle*{.35}}
\put(21,3.5){\circle*{.35}}
\put(6.5,2){{\scriptsize$\kappa_{1}^{(n)}$}}
\put(9.5,2){{\scriptsize$\kappa_{2}^{(n)}$}}
\put(12.5,2){{\scriptsize$\kappa_{3}^{(n)}$}}
\put(20.2,2){{\scriptsize$nG_d(0)$}}
\put(0,0){$|$}
\put(1.8,0){$\longleftarrow$}
\put(4.6,0){{\scriptsize $A$}}
\put(6,0){$\longrightarrow$}
\put(10,0){$|$}
\put(12,0){$\longleftarrow$}
\put(15.2,0){{\scriptsize $B$}}
\put(17,0){$\longrightarrow$}
\put(21,0){$|$}
\put(22,0){$\longleftarrow$}
\put(24.8,0){{\scriptsize $C$}}
\put(26.5,0){$\longrightarrow$}
\end{picture}
\caption{\small For $d\geq 3$ and $\rho<G_d(0)$, the system is partially intermittent 
on $A\cup B$ and not intermittent on $C$. Full intermittency on $A$ is conjectured, and 
proved for $n=1,2$.}
\label{fig-lambda02}
\end{center}
\end{figure}
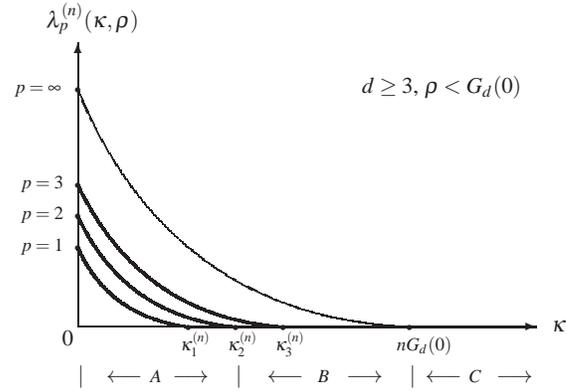



\begin{figure}[ht]
\vspace{3cm}
\begin{center}
\setlength{\unitlength}{0.22cm}
\begin{picture}(0,12)(16,0)
\put(0,3.5){\vector(1,0){29}}
\put(0,3.5){\vector(0,1){18}}
{\thicklines
\qbezier(0,18.5)(2,6)(6.5,3.5)
}
{\thicklines
\qbezier(0,18.5)(3,8.5)(10,3.5)
}
{\thicklines
\qbezier(0,18.5)(6,8)(20,3.5)
}
{\thicklines
\qbezier(0,18.5)(8,10)(25,3.5)
}
\qbezier[200](0,18.5)(14,18.5)(28,18.5)
%
\put(-1,2.3){$0$}
\put(30,3.2){$\rho$}
\put(-1,22.5){$\kappa$}
\put(-4,18.3){{\scriptsize$nG_{d}(0)$}}
\put(0,18.5){\circle*{.35}}
\put(6.5,3.5){\circle*{.35}}
\put(10,3.5){\circle*{.35}}
\put(20,3.5){\circle*{.35}}
\put(25,3.5){\circle*{.35}}
\put(6.5,2){{\scriptsize$G_d(0)$}}
\put(9.5,2){{\scriptsize$2G_d(0)$}}
\put(14.5,2){{\scriptsize$\cdots$}}
\put(17.4,2){{\scriptsize$(p-1)G_d(0)$}}
\put(23.8,2){{\scriptsize$pG_d(0)$}}
\put(1,7){{?}}
\put(3.5,7){{\scriptsize$2$-int.}}
\put(7.5,7){{$\cdots$}}
\put(12.6,7){{\scriptsize$p$-int.}}
\put(20,7){{$\cdots$}}
\put(12.6,20){{\scriptsize no intermittency}}
\end{picture}
\vspace{-.5cm}
\caption{\small Phase diagram of intermittency when $d$ is large enough.
The bold curves represent $\rho\in[0,\infty)\mapsto \kappa_{q}^{(n)}(\rho)$,
$q=1,\cdots,p$. In the ``?'' region, full intermittency is proved in a small neighborhood 
of $0$.}
\label{fig-lambda03}
\end{center}
\end{figure}
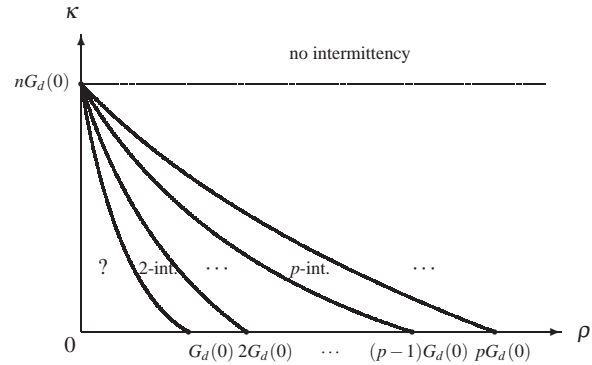



\subsection{Discussion}
\label{S1.4}

Our results can be extended to various different random medium. For example, consider 
the system of catalysts given by a collection of independent random walks where there is 
one walker starting from each site of a large box. More precisely, let $D_R$ denote the box 
in $\Z^d$ with side length $R$. Consider the random medium
\begin{equation*}
\xi(x,t)=\sum_{k\in D_R}\delta_{x}(Y_k^{\rho}(t))
\end{equation*}
with $\{Y_k^\rho:\;k\in D_R\}$ a family of $R^d$ simple random walks, where for each 
$k\in D_R$, $Y_k^{\rho}$ is a simple random walk with step rate $2d\rho$ starting from 
$Y_k^{\rho}(0)=k$. For a fixed size box, there is a positive probability that all the random 
walks meet at the origin in finite time. Then, it is easy to see that the Lyapunov exponents 
are the same as in the case of $n$ independent random walks starting from the origin 
where $n=R^d$. An interesting set up would be case where the length of the initial box 
grows with time. A natural question arises as whether the large time limit would be related 
to the case of Poisson field of simple random walks, considered in \cite{garhol06}, or it 
would have different behavior depending on how fast the size of the box grows with time.   

Let us  now  discuss some facts about the intermittent picture. First of all, as one can 
easily guess from \refeq{lyapdef2}, $\lambda^{(n)}_p(\kappa,\rho)$ is the top of  the 
spectrum of the operator $\cL_{p}$ where for $f( x_1, \cdots, x_p, y_1,\cdots, y_n)$ 
in $l_2(\Z^{d(p+n)})$ $\cL_{p}$, is defined by:
\begin{equation}
\label{Lpdef}
\cL_p(f)= \kappa \sum_{k=1}^p \Delta_{x_k} f 
+ \rho \sum_{j=1}^n \Delta_{y_j} f + I_p f \, .
\end{equation}
Here
\begin{equation*}
I_pf(x_1, \cdots, x_p,y_1,\cdots, y_n) = \sum_{k=1}^n \sum_{j=1}^p 
\delta_0(x_j-y_k)f(x_1, \cdots, x_p,y_1,\cdots, y_n).
\end{equation*}
This is the meaning of equation \refeq{formvar} of Section \ref{S2} from which most of 
our results are derived. The following proposition links  full intermittency and  existence 
of an eigenfunction corresponding to $\lambda^{(n)}_{1}(\kappa, \rho)$.
\begin{proposition}
\label{eigenfunc}
If there exists $f \in l_2(\Z^{d(1+n)})$ with $\nor{f}_2=1$, such that 
$\cL_1(f)=\lambda^{(n)}_1(\kappa,\rho) f$, then 
$\lambda^{(n)}_2(\kappa,\rho) > \lambda^{(n)}_1(\kappa,\rho)$, and 
the system is fully intermittent.
\end{proposition}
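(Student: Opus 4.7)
The plan is to exploit the variational formula $p \lambda_p^{(n)}(\kappa,\rho) = \sup \Sp(\cL_p)$ (implicit in equation (\ref{formvar}) of Section \ref{S2}), which reduces proving $\lambda_2^{(n)} > \lambda_1^{(n)}$ to producing a test function $F \in l^2(\Z^{d(2+n)})$ with $\bra{F, \cL_2 F} > 2 \lambda_1^{(n)}(\kappa,\rho)\, \nor{F}_2^2$.

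The natural candidate I would try is the tensor product
\[
F(x_1, x_2, y) := f(x_1, y)\, f(x_2, y), \qquad y := (y_1, \ldots, y_n),
\]
built from the assumed eigenfunction. Setting $g(y) := \sum_{x} f(x, y)^2$, one has $\sum_y g(y) = \nor{f}_2^2 = 1$, hence $\nor{F}_2^2 = \sum_y g(y)^2 \le \sup_y g(y) \le 1$; in particular $F \in l^2 \setminus \{0\}$.

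To compute $\cL_2 F$ I would apply the discrete Leibniz rule $\Delta_{y_j}(uv) = u \Delta_{y_j} v + v \Delta_{y_j} u + \Gamma_{y_j}(u,v)$, where $\Gamma_{y_j}(u,v)(y) := \sum_{y_j' \sim y_j}(u(y') - u(y))(v(y') - v(y))$ and $y' := (y_1, \ldots, y_j', \ldots, y_n)$. The $x_k$-Laplacians act on only one factor, and the potential splits cleanly as $I_2 = \sum_k \delta_0(x_1 - y_k) + \sum_k \delta_0(x_2 - y_k)$. Regrouping reassembles two copies of $\cL_1$ (one in $(x_1, y)$, one in $(x_2, y)$), leaving only cross contributions from the shared $y$-variables. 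Invoking the hypothesis $\cL_1 f = \lambda_1^{(n)} f$ then gives
\[
\cL_2 F \;=\; 2 \lambda_1^{(n)}\, F \;+\; \rho \sum_{j=1}^{n} \Gamma_{y_j}\!\bigl(f(x_1, \cdot), f(x_2, \cdot)\bigr).
\]

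Pairing this with $F$ and exchanging summation orders, each carr\'e-du-champ term simplifies to an obvious nonnegative square:
\[
\bra{F, \Gamma_{y_j}(f, f)} \;=\; \sum_{y,\; y_j' \sim y_j} \Bigg(\sum_{x \in \Z^d} f(x, y)\bigl[f(x, y') - f(x, y)\bigr]\Bigg)^{\!2} \;\ge\; 0.
\]
The only real obstacle is proving \emph{strict} positivity of the sum over $j$. The plan here: if every term were zero, the identity $\bra{f(\cdot, y), f(\cdot, y')} = \nor{f(\cdot, y)}^2$ for every pair $y, y'$ differing in one coordinate by one step would, after noting that symmetry of the relation gives $\nor{f(\cdot, y)} = \nor{f(\cdot, y')}$, force $f(\cdot, y) = f(\cdot, y')$ in $l^2(\Z^d)$ by the equality case of Cauchy-Schwarz. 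Connectedness of $\Z^{dn}$ would then make $y \mapsto f(\cdot, y)$ constant in $y$, which is incompatible with $\nor{f}_2 = 1$. Hence $\bra{F, \cL_2 F} > 2 \lambda_1^{(n)} \nor{F}_2^2$, so $\lambda_2^{(n)} > \lambda_1^{(n)}$, and $2$-intermittency is equivalent to full intermittency by the H\"older argument recalled after (\ref{p-int}).
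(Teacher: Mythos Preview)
Your proof is correct and follows essentially the same route as the paper: both take the test function $F(x_1,x_2,y)=f(x_1,y)f(x_2,y)$, use the discrete Leibniz rule to obtain $\cL_2 F = 2\lambda_1^{(n)} F + \rho\,(\text{carr\'e-du-champ terms})$, and then pair with $F$ to reduce matters to showing the resulting sum of squares is strictly positive. The only cosmetic difference is in this last step: the paper adds the two relations $\sum_x f(x,y)(f(x,z)-f(x,y))=0$ and $\sum_x f(x,z)(f(x,y)-f(x,z))=0$ directly to get $\sum_x (f(x,z)-f(x,y))^2=0$, whereas you reach the same conclusion via the equality case of Cauchy--Schwarz after first noting that the norms $\nor{f(\cdot,y)}_2$ are constant along edges.
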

Proposition \ref{eigenfunc} is proved in the appendix. The existence of an eigenfunction 
corresponding to $\lambda^{(n)}_{1}(\kappa, \rho)$ (and therefore full intermittency)
was proved in the following cases:
\begin{itemize}
\item $n=1,2$ and $\kappa + \rho < n G_d(0)$. This is done  in \cite{garhey06} for $n=1$, 
and in \cite{schwol10} for $n=2$.
\item  $n \ge 3$ and $4d(\rho n + \kappa) < 1$ in  \cite{schwol10}.
\end{itemize}
To prove these results, in  \cite{garhey06} and \cite{schwol10} 
$\lambda^{(n)}_{1}(\kappa, \rho)$ was expressed as the top of the spectrum of the 
operator $\cH = \cB + \sum_{j=1}^n \delta_0(z_j)$, where   $\cB$ is the generator 
of the Markov process 
$Z(t)= (X^{\kappa}_1(t) - Y^{\rho}_1(t), \cdots,X^{\kappa}_1(t) - Y^{\rho}_n(t))$
(see \refeq{lyapdef2}).  For $n=1$,  $\cH$ is just $(\kappa + \rho) \Delta + \delta_0$, 
which is a compact perturbation of $(\kappa + \rho) \Delta$. This fact easily implies 
the existence of an eigenfunction corresponding to $\lambda^{(1)}_1(\kappa,\rho)$. 
However, this is no more the case as soon as $n \ge 2$. In \cite{schwol10}, Schnitzler 
and Wolff considered $\cB$ as a perturbation of $\sum_{j=1}^n \delta_0(z_{j})$, leading 
to the results for $n \ge 2$. Expressing $\lambda^{(n)}_{p}(\kappa, \rho)$ in terms of the  
process $(Z(t))_{t \geq 0}$ does not seem very fruitful in  cases other than the one treated 
in \cite{garhey06} and\cite{schwol10}.  Therefore, it appeared to us more natural and more 
tractable to express $\lambda^{(n)}_{p}(\kappa, \rho)$ in terms of the process
$(X^{\kappa}_1(t), \cdots, X^{\kappa}_p(t), Y^{\rho}_1(t), \cdots,Y^{\rho}_n(t))$. 
We complete the intermittent picture by the following conjecture:
\begin{conjecture}[Intermittency]
Fix $n\in\N$. Then (see Fig.~{\rm\ref{fig-lambda01}--\ref{fig-lambda02}}),\\
(i) for $1\leq d\leq 2$, the system is full intermittent (proved for $n=1,2$); \\
(ii) for $d\geq 3$, the intermittency vanishes as  $\kappa$ increases. 
More precisely, for $d\geq 3$, there are three different regimes:
\begin{itemize}
\item[A:]{for $\kappa\in[0,\kappa_{2}^{(n)})$, the system is full intermittent (proved in a small 
neighborhood of $0$);}
\item[B:]{for $\kappa\in[\kappa_{2}^{(n)},nG_d(0))$, there exists 
$p=p(\kappa)\geq 3$ such that the system is $q$-intermittent for all $q\geq p$;}
\item[C:]{for $\kappa\in[nG_d(0),\infty)$, the system is not $p$-intermittent 
for any $p\geq 2$}.
\end{itemize}
\end{conjecture}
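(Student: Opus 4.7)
Regimes B and C of part (ii) are in fact consequences of results already proved in the paper. Regime C ($\kappa\ge nG_d(0)$) is the second bullet of Corollary~\ref{th5}(i). In regime B, $\kappa\in[\kappa_2^{(n)}(\rho),nG_d(0))$, the first bullet of Corollary~\ref{th5}(i) yields $p^{*}$-intermittency for some $p^{*}\ge 2$; since $\kappa_1^{(n)}\le\kappa_2^{(n)}$ forces $\lambda_1^{(n)}=\lambda_2^{(n)}=0$ throughout this range, we must have $p^{*}\ge 3$, and H\"older's inequality upgrades this to $q$-intermittency for all $q\ge p^{*}$. It therefore remains to establish the full-intermittency claims, namely part (i) and regime~A of part (ii).

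\textbf{Further reduction to the range where $\lambda_1^{(n)}>0$.} Full intermittency is equivalent to $2$-intermittency via H\"older. Inside regime A, the subinterval $[\kappa_1^{(n)}(\rho),\kappa_2^{(n)}(\rho))$ is handled trivially since there $\lambda_1^{(n)}=0<\lambda_2^{(n)}$. Hence the conjecture reduces to showing
\[
\lambda_2^{(n)}(\kappa,\rho) > \lambda_1^{(n)}(\kappa,\rho) \quad\text{whenever}\quad \lambda_1^{(n)}(\kappa,\rho)>0,
\]
i.e.\ for $\kappa<\kappa_1^{(n)}(\rho)$. By Theorem~\ref{th2}(ii) this covers the whole $\kappa$-range in case (i), $1\le d\le 2$.

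\textbf{The eigenfunction strategy.} The plan is to invoke Proposition~\ref{eigenfunc}, for which it suffices to exhibit an $l_2$-eigenfunction of $\cL_1$ at its spectral top $\lambda_1^{(n)}$. The operator $\cL_1$ is invariant under the diagonal translation $(x_1,y_1,\ldots,y_n)\mapsto(x_1+v,y_1+v,\ldots,y_n+v)$, which allows a Fourier decomposition along the center-of-mass direction; the top of the spectrum is attained at momentum zero, on a fiber acting on functions of the relative coordinates $z_k=x_1-y_k$ with potential $\sum_{k=1}^n \delta_0(z_k)$. An HVZ-type decomposition should then identify the essential spectrum of this fiber with the union of spectra obtained by sending nonempty subsets of the $z_k$'s to infinity; each escaping coordinate contributes only a free Laplacian band, so the top of the essential spectrum ought to equal $\lambda_1^{(n-1)}(\kappa,\rho)$. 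If the strict inequality $\lambda_1^{(n)}(\kappa,\rho)>\lambda_1^{(n-1)}(\kappa,\rho)$ holds whenever $\lambda_1^{(n)}>0$, then $\lambda_1^{(n)}$ is an isolated eigenvalue, yielding the desired $l_2$-eigenfunction and hence full intermittency via Proposition~\ref{eigenfunc}.

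\textbf{The main obstacle.} The hard step is the rigorous HVZ decomposition for the non-compactly supported multi-diagonal potential together with the strict monotonicity $\lambda_1^{(n)}>\lambda_1^{(n-1)}$. This is known only for $n=1,2$ (\cite{garhey06,schwol10}) and in the perturbative regime $4d(\rho n+\kappa)<1$ (\cite{schwol10}); beyond these, the plan is variational: starting from an almost-maximizer of the Rayleigh quotient for $\lambda_1^{(n-1)}$ in the formulation~\refeq{formvar}, one inserts a localized bump along the extra diagonal $\{x_1=y_n\}$ to produce a trial function that strictly beats $\lambda_1^{(n-1)}$. For $d=1,2$ the recurrence of the difference walks makes normalizing such a bump genuinely subtle, and it is precisely here, rather than in the abstract HVZ scheme, that one expects a new input beyond the methods of \cite{garhey06,schwol10} to be required.
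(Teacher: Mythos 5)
This statement is labelled a \emph{conjecture} in the paper: the authors do not prove it, and there is therefore no proof to compare yours against. Your proposal does not prove it either, and you say so yourself; what you have written is a reduction plus a strategy outline. The reduction is correct and worth crediting: regime C is indeed the second bullet of Corollary~\ref{th5}(i); regime B follows from the first bullet of Corollary~\ref{th5}(i) together with the H\"older upgrade and the observation that $\lambda_1^{(n)}=\lambda_2^{(n)}=0$ on $[\kappa_2^{(n)},nG_d(0))$ forces the intermittency index there to be at least $3$ (this part of the ``conjecture'' is thus already a theorem, which is consistent with how the paper presents it); and the remaining content of (i) and regime A correctly reduces, via the equivalence of $2$-intermittency and full intermittency, to showing $\lambda_2^{(n)}>\lambda_1^{(n)}$ on the set where $\lambda_1^{(n)}>0$, i.e.\ on $[0,\kappa_1^{(n)}(\rho))$, which by Theorem~\ref{th2}(ii) is all of $[0,\infty)$ when $d\le 2$. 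This matches the paper's own framing in Section~\ref{S1.4}: full intermittency would follow from Proposition~\ref{eigenfunc} once an $l_2$ ground state of $\cL_1$ is exhibited, and such a ground state is known only for $n=1,2$ with $\kappa+\rho<nG_d(0)$ and for $n\ge 3$ in the perturbative regime $4d(\rho n+\kappa)<1$ (\cite{garhey06}, \cite{schwol10}), which is exactly the provenance of the parenthetical ``proved for $n=1,2$'' and ``proved in a small neighborhood of $0$''.

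The genuine gap is the one you name: the HVZ-type identification of the essential spectrum of the fiber operator with a quantity like $\lambda_1^{(n-1)}$, and the strict monotonicity $\lambda_1^{(n)}>\lambda_1^{(n-1)}$ whenever $\lambda_1^{(n)}>0$, are asserted only as what ``ought to'' hold; neither is established, and the variational bump construction you sketch for the strict inequality is precisely where the recurrence of the difference walks in $d=1,2$ blocks the argument. Since the conjecture stands or falls on exactly this step, your proposal should be read as a correct account of what is proved and what remains open, not as a proof of the statement.
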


  To complete Theorem \ref{th4}, we close with a conjecture about critical 
$\kappa$'s, whose analogue for white noise potential was conjectured in Carmona 
and Molchanov \cite{carmol94} and partially proved in Greven and den Hollander 
\cite{grehol07}:
\begin{conjecture}[Critical $\kappa$'s]
 For all fixed $n\geq 1$ and $d$ large enough the $\kappa_p^{(n)}$'s are distinct 
 (see Fig.~{\rm\ref{fig-lambda02}}).
\end{conjecture}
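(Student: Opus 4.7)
The plan is to extend Theorem \ref{th4}(iii), which already gives $\kappa_{p-1}^{(n)}(\rho)<\kappa_p^{(n)}(\rho)$ for every $\rho\in(0,pG_d(0))$ whenever $\alpha_d>(p-1)/p$, i.e.\ $p<1/(1-\alpha_d)$.  By Lemma \ref{lm}, this condition catches any prescribed finite range $\{2,\ldots,P\}$ of indices provided $d$ is taken large enough.  Since $\alpha_d<1$ for every finite $d$, however, the ``tail'' indices $p\geq 1/(1-\alpha_d)$ are not covered by this argument, and these are precisely the $p$ for which $\kappa_p^{(n)}(\rho)$ lies close to its supremum $nG_d(0)$ (a consequence of Theorem \ref{th3}(i) and the strict monotonicity of $\mu$ on $[0,G_d(0)]$). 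These tail indices are what a proof of the conjecture really has to handle.

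The most natural route I see is to reduce the problem to proving strict $p$-intermittency on the critical curve itself: since $\lambda_{p-1}^{(n)}(\kappa_{p-1}^{(n)}(\rho),\rho)=0$ by definition, it suffices to establish $\lambda_p^{(n)}(\kappa_{p-1}^{(n)}(\rho),\rho)>0$, and then $\kappa_{p-1}^{(n)}(\rho)<\kappa_p^{(n)}(\rho)$ follows by the monotonicity of $\kappa\mapsto\lambda_p^{(n)}(\kappa,\rho)$ from Theorem \ref{th1}(ii).  Concretely, I would use the variational characterization of $\lambda_p^{(n)}$ as the top of the spectrum of the operator $\cL_p$ of \refeq{Lpdef} and exhibit a test function with strictly positive Rayleigh quotient at $\kappa=\kappa_{p-1}^{(n)}(\rho)$.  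A natural candidate is a tensor product of (i) an approximate ground state of $\cL_{p-1}$ at energy close to $0$ and (ii) one extra factor, in the new reactant variable, modelled on the top eigenfunction of $(\kappa/n)\Delta+\delta_0$ on $l^2(\Z^d)$, which is the correct shape singled out by the $p\to\infty$ limit $n\mu(\kappa/n)$ of Theorem \ref{th3}(i).  Sharpening the lower bound \refeq{kappabds2} along these lines should cover the indices not reached by Theorem \ref{th4}(iii).

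The main obstacle is that as $p\to\infty$ both $nG_d(0)-\kappa_p^{(n)}(\rho)$ and the gap $n\mu(\kappa/n)-\lambda_p^{(n)}(\kappa,\rho)$ vanish like $1/p$, so any trial-function argument must be tuned precisely enough to detect the correction at that order without swallowing it in constants.  In the analogous Gaussian white-noise setting these fine asymptotics were worked out by Greven and den Hollander \cite{grehol07} via Birman--Schwinger-type resolvent analysis together with careful estimates on $G_d$ for large $d$.  Adapting their method to $\cL_p$ is delicate because in \refeq{Lpdef} the catalyst block $\rho\sum_j\Delta_{y_j}$ is coupled to the reactant block $\kappa\sum_k\Delta_{x_k}$ through the interaction potential $I_p$, so the natural resolvent identity does not factorize, and it is precisely this coupling that would have to be controlled in order to close the argument.
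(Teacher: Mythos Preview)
This statement is presented in the paper as a \emph{conjecture}, not a theorem: the paper offers no proof of it. It is introduced in the Discussion (Section~\ref{S1.4}) with the remark that the analogous statement for white noise potential was conjectured in Carmona and Molchanov~\cite{carmol94} and only partially proved in Greven and den Hollander~\cite{grehol07}. So there is nothing to compare your argument against.

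Your write-up is not a proof either, and to your credit you say so: the second and third paragraphs are a research plan, and you explicitly identify the obstruction (the $1/p$ scale of the gaps for large $p$, and the non-factorization of the resolvent of $\cL_p$ due to the coupling through $I_p$). Your first paragraph is an accurate summary of what the paper \emph{does} prove: Theorem~\ref{th4}(iii) together with Lemma~\ref{lm} gives $\kappa_{p-1}^{(n)}(\rho)<\kappa_p^{(n)}(\rho)$ for any finite range of $p$ once $d$ is large enough, but since $\alpha_d<1$ this never covers all $p$ simultaneously. That is exactly why the full statement is left as a conjecture. The trial-function strategy you sketch (tensoring an approximate $\cL_{p-1}$ ground state with the top eigenfunction of $(\kappa/n)\Delta+\delta_0$) is plausible in spirit, but as you note, making it rigorous would require controlling corrections at order $1/p$, which is not done here and is precisely the open problem.
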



\section{Proof of Theorem \ref{th1}}
\label{S2}

{\bf Step 1:} We first prove that if the limit in \refeq{aLyapdef} exists for $x=0$, 
then it exists for all $x\in \Z^d$ and does not depend on $x$ as soon as 
$(\kappa,\rho) \ne (0,0)$. To this end, let us introduce some notations. For any 
$t >0$, we denote 
\[ Y_t = (Y^{\rho}_1(t), \cdots, Y^{\rho}_n(t)) \in \Z^{dn}
\, \, ,  \,\, 
X_t = (X^{\kappa}_1(t), \cdots, X^{\kappa}_p(t)) \in \Z^{dp} \, .
\]
For $(x,y) \in \Z^{dp} \times \Z^{dn}$, $\EE^{X,Y}_{x,y}$ denote 
the expectation under the law of $(X_t,Y_t)_{t \ge 0}$ starting from $(x,y)$.
The same notation is used for $x \in \Z^d$ and $y \in \Z^d$. In that case,
it means that $X_0=(x, \cdots, x)$, $Y_0=(y,\cdots,y)$ and 
$\EE^{X,Y}_{x,y} = \EE_{y}^{\otimes n}\otimes \ES_{x}^{\otimes p}$. Finally,  
for  $x=(x_1,\cdots,x_p)\in \Z^{dp}$ and $y=(y_1,\cdots,y_n) \in \Z^{dn}$, set
\begin{equation}
\label{Idef}
I_p(x,y) = \sum_{j=1}^p \sum_{k=1}^n \delta_0(x_j-y_k) \, . 
\end{equation} 
Then, by time reversal for $Y$ in \refeq{lyapdef2}, 
for all $x \in \Z^d$ and  $t > 0$,
\begin{equation}
\label{momentp}
\EE_0^{\otimes n} \cro{u(x,t)^p}
= \sum_{z \in \Z^{dn}} \EE^{X,Y}_{x,z} \cro{\exp\pare{\int_0^t 
I_p(X_s,Y_s) \, ds} \delta_0(Y_t)} \, .
\end{equation} 
Using the Markov property at time 1 and the fact that 
$1 \le \exp \pare{\int_0^1 I_p(X_s,Y_s) \,ds}$, 
we get for $x_1$ and $x_2$ any fixed points in $\Z^d$,  
\begin{eqnarray*} 
\EE_0^{\otimes n} \cro{u(x_1,t)^p}
& \ge & \sum_{z \in \Z^{dn}} \EE^{X,Y}_{x_1,z} 
\cro{\delta_{(x_2, \cdots, x_2)}(X_1) \delta_z(Y_1) 
\exp\pare{ \int_1^t I_p(X_s,Y_s) \, ds }\delta_{0}(Y_{t})}
\\
& = & (p_1^{\kappa}(x_1,x_2))^p (p_1^{\rho}(0,0))^n
 \EE_0^{\otimes n} \pare{\cro{u(x_2,t-1)}^p} \, ,
\end{eqnarray*} 
where $p_t^{\nu}$ is the transition kernel of a simple random walk on $\Z^d$ with 
step rate $2d\nu$. This proves the independence of $\lambda_p$ w.r.t.\  $x$ as 
soon as $\kappa >0$, since in this case for all $x_1, x_2\in\Z^d$, 
$p_1^{\kappa}(x_1,x_2) >0$.

For $\kappa =0$, since the $X$-particles do not move, we have 
\begin{equation} 
\label{kappa0}
 \EE_0^{\otimes n} \cro{u(x_1,t)^p} = \EE_0\cro{\exp\pare{p \int_0^t
\delta_{x_1}(Y^{\rho}_1(s)) \, ds}}^n \, .
\end{equation} 
The same reasoning leads now to 
\[ \EE_0^{\otimes n} \cro{u(x_1,t)^p}
\ge p_1^{\rho}(0,x_1-x_2)^n 
\EE_0^{\otimes n} \pare{\cro{u(x_2,t-1)}^p} \, .
\]
{\bf Step 2: Variational representation.}
From now on, we restrict our attention
to the case $x=0$. The aim of this step is to give a variational representation of 
$\lambda_p^{(n)}(\kappa, \rho)$. To this end, we introduce further notations. Let
$(e_1, \cdots, e_d)$ be the canonical basis of $\R^d$. For $x = (x_1,\cdots,x_p) 
\in \Z^{dp}$, and $f: (x,y) \in \Z^{dp} \times \Z^{dn} \mapsto \R$, we set
\[ \nabla_x f(x,y) = \pare{\nabla_{x_1} f(x,y), \cdots, \nabla_{x_p} f(x,y)}
\in \R^{dp} \, , 
\]
where for $j \in \acc{1,\cdots, p}$, and $i \in \acc{1,\cdots,d}$,
\[ \bra{\nabla_{x_j} f(x,y),e_i} 
= f(x_1,\cdots,x_j+e_i,\cdots, x_p,y) - f(x,y) \, .
\]
The same notation is used for the $y$-coordinates, so that $\nabla_y f(x,y) 
\in \R^{dn}$.
 We also define 
\begin{eqnarray*} 
\Delta_x f(x,y) & = & \sum_{j=1}^p \Delta_{x_j} f (x,y)
\\
& = &  \sum_{j=1}^p \sum_{{z_j \in\Z^d} \atop {z_j\sim x_j}}
\big[f(x_1,\cdots,z_j,\cdots, x_p,y) - f(x_1,\cdots,x_j,\cdots, x_p,y)\big] \, .
\end{eqnarray*} 

\begin{proposition} 
Let $d\geq 1$ and $n,p\in\N$. For all $\kappa,\rho\in[0,\infty)$,
\begin{eqnarray}
\label{formvar} 
\lambda_p^{(n)}(\kappa, \rho) 
& = & \lim_{t \rightarrow \infty} \frac{1}{pt} 
\log \EE_0^{\otimes n} \cro{ u(0,t)^p } 
\nonumber
\\
& = & \frac{1}{p} 
			\sup_{
			 f \in l^2(\Z^{dp} \times \Z^{dn}) 
			\atop \nor{f}_2=1}
\hspace{-.2cm} \acc{ - \kappa \nor{\nabla_x f}_2^2 - \rho 
			\nor{\nabla_y f}_2^2 
+ \sum_{(x,y)}  I_p(x,y) f^2(x,y)} \, .\;\;\;
\end{eqnarray}  
\end{proposition}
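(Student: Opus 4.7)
My plan is to interpret $F(t) := \EE_0^{\otimes n}[u(0,t)^p]$ as a Feynman-Kac expression for the bounded self-adjoint operator $\cL_p$ of \refeq{Lpdef} on $l^2(\Z^{d(p+n)})$, show that its exponential growth rate equals $\sup \Sp(\cL_p)$, and then apply the Rayleigh-Ritz principle to rewrite this in the variational form of \refeq{formvar}; this simultaneously gives existence of the limit in \refeq{aLyapdef}. Concretely, from \refeq{momentp} with $x=0$, the Feynman-Kac formula identifies each summand as $(e^{t\cL_p} g)(0, z)$ with $g(x, y) := \delta_{0}(y)$ (the indicator in $\Z^{dn}$ that every walker-coordinate $y_k$ equals the origin), so that
\[ F(t) = \sum_{z \in \Z^{dn}} (e^{t\cL_p} g)(0, z). \]
The operator $\cL_p = \kappa \Delta_x + \rho \Delta_y + I_p$ is bounded and self-adjoint on $l^2(\Z^{d(p+n)})$, because $\Delta_x, \Delta_y$ are self-adjoint with respect to the counting measure and $I_p$ is a bounded non-negative multiplication operator.

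\textbf{Matching the exponential rate and Rayleigh-Ritz.} For the upper bound I replace $g$ by $g_R := g\,\ind{\va{y} \le R(t)}$ with $R(t)$ growing sub-exponentially in $t$ (the discarded tail being negligible by heat-kernel tail bounds on $Y$), and apply the spectral bound $|\bra{\delta_{(0,z)}, e^{t\cL_p} g_R}| \le \nor{g_R}_2\, e^{t\sup\Sp(\cL_p)}$, then sum over $z$ in the truncated ball. For the lower bound, fix $\varepsilon > 0$ and pick $f \in l^2(\Z^{d(p+n)})$ with compact support $K$, $\nor{f}_2 = 1$, and $\bra{f, \cL_p f} \ge \sup \Sp(\cL_p) - \varepsilon$; restrict the Feynman-Kac expectation defining $F(t)$ to the event that $(X_s, Y_s) \in K$ for all $s \le t$ and use that the Dirichlet restriction $\cL_p|_K$ has a strictly positive principal eigenfunction (Perron-Frobenius on the finite set $K$) with eigenvalue $\ge \sup \Sp(\cL_p) - \varepsilon$. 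This yields $F(t) \ge c_{f,K}\, e^{t(\sup\Sp(\cL_p) - 2\varepsilon)}$ for $t$ large, and letting $\varepsilon \downarrow 0$ identifies the rate. Finally, $\sup \Sp(\cL_p) = \sup_{\nor{f}_2 = 1} \bra{f, \cL_p f}$ by Rayleigh-Ritz; summation by parts on $\Z^d$ gives $\bra{f, -\kappa \Delta_x f} = \kappa \nor{\nabla_x f}_2^2$ and the analogous identity for $y$, while multiplication by $I_p$ gives $\bra{f, I_p f} = \sum_{(x,y)} I_p(x,y) f(x,y)^2$; dividing by $p$ produces \refeq{formvar}.

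\textbf{Main obstacle.} The genuinely delicate point is the lower bound, because the initial datum $g$ lives on the slice $\{y = 0\}$ and is not in $l^2$, so one cannot directly pair $g$ with the approximate Rayleigh-Ritz maximiser $f$. Bridging this gap combines Perron-Frobenius for the Dirichlet restriction on a large box $K$ containing the origin with heat-kernel tail bounds on $(X,Y)$ leaving $K$, together with a translation argument to ensure that the deterministic starting configuration $(0,0)$ has non-trivial overlap with (a translate of) the principal eigenfunction. The upper bound and the Rayleigh-Ritz identification are then standard.
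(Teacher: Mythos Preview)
Your overall strategy is sound and, for the upper bound, essentially coincides with the paper's: both truncate to land in $l^2$ and then invoke the spectral-norm bound $\nor{e^{t\cL_p}}_{2,2}=e^{t\sup\Sp(\cL_p)}$. One slip: since $g(x,y)=\delta_0(y)$ is already supported at $y=0$, your truncation $g_R=g\,\ind{\va{y}\le R}$ does nothing. What is needed is to truncate the $x$-variable in $g$ (i.e.\ restrict $X_t$ to a ball) and, separately, to truncate the sum over starting points $z$ (i.e.\ restrict $Y_0$); the discarded tails are then controlled by heat-kernel bounds on $X$ and on $Y$ respectively. This is precisely what the paper does, writing the truncated moment as $\bra{f_1, e^{t\cL_p} f_2}$ with $f_1=\delta_0(x)\ind{B^n_R}(y)$ and $f_2=\ind{B^p_R}(x)\delta_0(y)$ and $R=t\log t$.

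The lower bound is where your route genuinely differs. You take a Dirichlet/Perron--Frobenius path: pick a finite connected $K$ with principal Dirichlet eigenvalue $\lambda_K\ge\sup\Sp(\cL_p)-\varepsilon$, restrict to $\delta_0(X_t)\delta_0(Y_t)$ and to paths staying in $K$, and use $\bra{\delta_{(0,0)},e^{t\cL_p^{D,K}}\delta_{(0,0)}}\ge\psi(0,0)^2e^{t\lambda_K}$. This works and is more elementary than the paper, which instead bounds $F(t)\ge\nor{e^{(t/2)\cL_p}\delta_{(0,0)}}_2^2$, uses Jensen on a growing ball to reduce to $\EE^{X,Y}_{0,0}\big[\exp\big(\int_0^{t/2}I_p\,ds\big)\big]$, and then applies the weak LDP for the occupation measure of $(X,Y)$ together with Varadhan's lemma. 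The paper's route delivers the variational formula in one stroke without a separate Rayleigh--Ritz step, at the cost of large-deviations machinery; yours avoids that machinery but needs the extra care you flag. One caveat on that care: your ``translation argument'' is not quite the right fix, since only diagonal shifts $(v,\dots,v)$ preserve $I_p$ and there is no reason for $\mathrm{supp}\,f$ to contain a diagonal point. It is cleaner to take $K$ to be a large centred cube from the start (Dirichlet eigenvalues on cubes exhaust $\sup\Sp(\cL_p)$ by density of finitely supported functions in the quadratic form), or simply to enlarge $K$ to a connected set containing both $\mathrm{supp}\,f$ and $(0,0)$, which can only raise $\lambda_K$.
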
 

\begin{proof}
{\bf Upper bound.}
For a positive integer $m$, let $B^m_R$ denote the ball in $\Z^{dm}$ of radius 
$R = t \log(t)$ centered at the origin. We first prove the following lemma which 
states  we can restrict (\ref{momentp}) to  $X$ paths being in $B^p_R$ 
at time $t$ and $Y$ paths  starting from $B^n_R$.
\begin{lemma}\label{restrball} 
As $t\to \infty$,
\begin{equation}
\EE_0^{\otimes n}\cro{u(x,t)^p}=(1+o(1))\sum_{z \in B^n_R} \EE^{X,Y}_{0,z} 
\cro{\exp\pare{\int_0^t I_p(X_s,Y_s) \, ds}
\delta_0(Y_t) \ind{(B^p_R)}(X_t)}\, .
\end{equation}
\end{lemma}
\begin{proof}
It is enough to prove that 
\begin{equation}
r(t):=\frac{\displaystyle\EE_0^{\otimes n}\cro{u(x,t)^p}
-\sum_{z \in B^n_R}\EE^{X,Y}_{0,z} \cro{\exp\pare{\int_0^t I_p(X_s,Y_s) \, ds}
\delta_0(Y_t) \ind{(B^p_R)}(X_t)}}{\displaystyle\EE_0^{\otimes n}\cro{u(x,t)^p}}
\end{equation}
converges to $0$ as $t\to\infty$. Using the trivial bounds 
\begin{equation}
\label{Ibds}
1\leq \exp\pare{\int_0^t I_p(X_s,Y_s) \, ds}\leq \exp(tnp)
\end{equation} 
and  splitting the  sum in (\ref{momentp}), we get
\begin{align*}
r(t)&\leq \frac{e^{tnp}}
{\displaystyle\sum_{z \in Z^{dn}}\EE_{0,z}^{X,Y} \cro{ \delta_0(Y_t)}}
\Bigg(\sum_{z \notin B^n_R}\EE_{0,z}^{X,Y} \cro{ \delta_0(Y_t)}
+\sum_{z \in B^n_R}\EE^{X,Y}_{0,z} \cro{\delta_0(Y_t) \ind{(B^p_R)^c}(X_t)}\Bigg)\\
&\leq \frac{e^{tnp}}
{\displaystyle\sum_{z\in \Z^{dn}}\PP_z(Y_t=0)} 
\Bigg(\sum_{z \notin B^n_R}\PP_z(Y_t=0)
+\PP_0(X_t\notin B_R^p)\sum_{z\in B_R^n}\PP_z(Y_t=0)\Bigg)\\
&\leq e^{tnp}(\PP_0(Y_t\notin B_R^n)+ \PP_0(X_t\notin B_R^p)),
\end{align*}
where for the last two inequalities we used the time-reversal of $Y$. We have 
for $R=t\log(t)$ and large enough $t$
\begin{equation}
\label{BRbds}
\begin{aligned}
&\PP_0(Y_1^{\rho}(t)\notin B_R^1)\leq \exp[- C(d,\rho) t\log(t)],\\&
\PP_0(X_1^{\kappa}(t)\notin B_R^1)\leq \exp[- C(d,\kappa) t\log(t)]
\end{aligned}
\end{equation}
for some positive constants $C(d,\rho)$ and $C(d,\kappa)$ (see for instance Lemma 4.3 
in \cite{garmol90}). Using this we get
\begin{equation*}
r(t)\leq e^{tnp}\Big(n e^{-C(d,\rho)t\log t}+ p e^{-C(d,\kappa) t\log t}\Big)
\overset{t\to\infty}\longrightarrow 0.
\end{equation*}
This finishes the proof of the lemma.
\qed
\end{proof} 

Using Lemma \ref{restrball} it is enough to study the existence of 
\begin{eqnarray*} 
& & \lim_{t \rightarrow \infty} \frac{1}{t} \log  
\sum_{z \in B^n_R}  \EE^{X,Y}_{0,z} 
\cro{\exp\pare{\int_0^t I_p(X_s,Y_s) \, ds} \delta_0(Y_t) 
\ind{B^p_R}(X_t)}
\\
& & \qquad= \lim_{t \rightarrow \infty} \frac{1}{t} \log 
\bra{f_1, e^{\, t \cL_p} f_2}  \, ,
\end{eqnarray*} 
where $f_1: (x,y) \in \Z^{dp} \times \Z^{dn} 
\mapsto \delta_0(x) \ind{B^n_R}(y)$, $f_2: (x,y) \in \Z^{dp} \times \Z^{dn} 
\mapsto \ind{B^p_R}(x)  \delta_0(y)$, and $\cL_p$ is the bounded self-adjoint 
operator in $l^2(\Z^{dp} \times \Z^{dn})$ defined by 
\[ 
\cL_p f(x,y) = \kappa \Delta_x f(x,y) + \rho \Delta_y f (x,y) 
+ I_p(x,y) f(x,y),
\quad
(x,y) \in \Z^{dp} \times \Z^{dn} \, .
\]
For a linear operator $\mathcal{L}$ on $l^2(\Z^{dp} \times \Z^{dn})$ we define
$$
\nor{\mathcal{L}}_{2,2}:=\sup_{f \in l^2(\Z^{dp} \times \Z^{dn}) 
				\atop  \nor{f}_2=1}
			        \bra{f,\cL f}.
$$
Note that we have 
$$
\bra{f_1, e^{t\cL_p} f_2} \le \nor{f_1}_2 \nor{e^{t \cL_p}}_{2,2}
\nor{f_2}_2 = C(d,n,p) R^{d(n+p)/2}  \nor{e^{t \cL_p}}_{2,2},
$$ 
for some constant $C(d,n,p)>0$. Thus,
\[ \lim_{t \rightarrow \infty}
\frac{1}{t} \log \bra{f_1, e^{\, t \cL_p} f_2}
\le \nor{\cL_p}_{2,2} = \sup_{f \in l^2(\Z^{dp} \times \Z^{dn}) 
				\atop  \nor{f}_2=1}
			        \bra{f,\cL_p f} 
\, , 
\] 
which is the upper bound in \refeq{formvar}.	

\vspace{.3cm}
\noindent 
{\bf Lower bound.}
By \refeq{momentp} with $x=0$, it follows that
\begin{eqnarray*} 
\EE_0^{\otimes n} \cro{u(0,t)^p} 
& \ge & \EE^{X,Y}_{0,0} \cro{\exp\pare{\int_0^{t}
 I_p(X_s,Y_s) \, ds} \delta_0(X_t) \delta_0(Y_t)}
\\
& =  & \bra{\delta_0 \otimes \delta_0, e^{t \cL_p} (\delta_0 \otimes \delta_0)}
= \nor{e^{\frac{t}{2} \cL_p} (\delta_0 \otimes \delta_0)}_2^2
\\
& = & \sum_{x\in\Z^{dp}} \sum_{ y\in\Z^{dn}} \pare{e^{\frac{t}{2} \cL_p} 
 (\delta_0 \otimes \delta_0)(x,y)}^2 \, .
\end{eqnarray*} 
Restricting the sum over $B^p_{R}\times B^n_{R}$, and applying Jensen's 
inequality, we get
\begin{eqnarray*} 
& & \EE_0^{\otimes n} \cro{u(0,t)^p}
\\ 
& & \qquad \ge  \sum_{x \in B^p_R} \sum_{y \in B^n_R} 
\pare{e^{\frac{t}{2} \cL_p} 
 (\delta_0 \otimes \delta_0)(x,y)}^2
\\
& & \qquad \ge \frac{1}{|B^n_R|} \frac{1}{|B^p_R|} 
\pare{\sum_{x \in B^p_R} \sum_{y \in B^n_R} e^{\frac{t}{2} \cL_p} 
 (\delta_0 \otimes \delta_0)(x,y)}^2 
\\
& & \qquad = \frac{C(d,n,p)}{R^{d(n+p)}} 
\pare{ \sum_{x \in B^p_R} \sum_{y \in B^n_R} 
\EE^{X,Y}_{x,y} \cro{\exp\pare{\int_0^{t/2} I_p(X_s,Y_s) \, ds} 
\delta_0(X_{t/2})
\delta_0(Y_{t/2})}}^2
\\
& & \qquad = 
 \frac{C(d,n,p)}{R^{d(n+p)}} 
\pare{ \EE^{X,Y}_{0,0} \cro{\exp\pare{\int_0^{t/2} I_p(X_s,Y_s) \, ds} 
\ind{B^p_R}(X_{t/2}) \ind{B^n_R}(Y_{t/2})} }^2
\, .
\end{eqnarray*} 
Taking $R=t \log(t)$, we obtain that
\begin{eqnarray*}   
&& \liminf_{t \rightarrow \infty}   
\frac{1}{t} \log \EE_0^{\otimes n} \cro{u(0,t)^p}  
\\
&&\qquad \ge  \liminf_{t \rightarrow \infty} \frac{2}{t} 
\log \EE^{X,Y}_{0,0} \cro{ \exp \pare{\int_0^{t/2} I_p(X_s,Y_s) \, ds}
\ind{B^p_R}(X_{t/2}) \ind{B^n_R}(Y_{t/2}) }
\, .
\end{eqnarray*} 
On the other hand, by (\ref{Ibds}), (\ref{BRbds}) and our choice of $R$, we have
\begin{eqnarray*}   
&& \EE^{X,Y}_{0,0} \cro{ \exp \pare{\int_0^{t/2} I_p(X_s,Y_s) \, ds} 
\ind{(B^p_R \times B^n_R)^c}(X_{t/2},Y_{t/2})} 
\\
&& \qquad \le \exp\Big(\frac{tnp}{2}\Big) 
\PP_{0}(X_{t/2}\notin B_{R}^p)\PP_{0}(Y_{t/2}\notin B_{R}^n)
\\
&& \qquad \le np\exp\Big[\frac{tnp}{2}  - \big(C(d,\rho)+C(d,\kappa)\big)t \log(t)\Big] \, , 
\end{eqnarray*}
and therefore, with a similary reasoning as in the proof of Lemma \ref{restrball} we get 
\[ 
\liminf_{t \rightarrow \infty} 
\frac{1}{t} \log \EE_0^{\otimes n} \cro{u(0,t)^p} 
\ge  \liminf_{t \rightarrow \infty} \frac{2}{t}  
\log \EE^{X,Y}_{0,0} \cro{ \exp \pare{\int_0^{t/2} I_p(X_s,Y_s) \, ds}}
\, .
\]
  Now, the occupation measure $\frac{1}{t} \int_0^t \delta_{(X_s,Y_s)} \, ds$
satisfies a weak large deviations principle (LDP) 
in the space $\cM_1(\Z^{dp}\times 
\Z^{dn})$ of probability measures on $\Z^{dp}\times \Z^{dn}$, endowed with
the weak topology. The speed of this LDP is $t$ and the rate function is given
for all $\nu \in \cM_1(\Z^{dp}\times \Z^{dn})$ by 
\[ J(\nu) = \kappa \nor{\nabla_x \sqrt{\nu}}_2^2 + \rho \nor{\nabla_y
\sqrt{\nu}}_2^2 \, ,
\] 
(see e.g.\ den Hollander \cite{hol00}, Section IV.4). Since $I$ is bounded, the lower 
bound in Varadhan's integral lemma (see e.g.\ den Hollander \cite{hol00}, Section III.3) 
yields 
\[
\liminf_{t \to \infty} \frac{1}{t} \log \EE_0^{\otimes n} \cro{u(0,t)^p} 
\ge \sup_{\nu \in  \cM_1(\Z^{dp}\times \Z^{dn})}
\acc{ \sum_{(x,y)} I_p(x,y) \nu(x,y) - J(\nu) } \, .
\] 
Setting $f(x,y) = \sqrt{\nu}(x,y)$ gives
then the lower bound in \refeq{formvar}.
\qed
\end{proof}

\vspace{.2cm}
\noindent
{\bf Step 3: Properties of $\lambda^{(n)}_p$.} Since $0 \le I_p(x,y) \le np$,
we clearly have $0 \le \lambda^{(n)}_p \le n$. Using representation
\refeq{formvar}, we can conclude that the function $(\kappa,\rho) \mapsto 
\lambda^{(n)}_p(\kappa,\rho)$ is convex and non-increasing in 
$\kappa$ and $\rho$. Moreover, $\lambda^{(n)}_p(\kappa,\rho)$ is lower 
semi-continuous since it is supremum of functions that are linear in  $\kappa$ 
and $\rho$. Finally, since every finite convex function is also upper 
semi-continuous, $\lambda^{(n)}_p$ is upper semi-continuous. Hence, 
$\lambda^{(n)}_p(\kappa,\rho)$ is continuous.


\section{Proof of Theorems \ref{th2}--\ref{th3}}
\label{S3}
By symmetry, note that for all $n, p\in\N$ and $\kappa,\rho\in[0,\infty)$, 
\begin{equation}
\label{sym}
\lambda^{(n)}_p(\kappa,\rho) = \frac{n}{p} \lambda^{(p)}_n(\rho,\kappa) \, .
\end{equation}

\subsection{Proof of Theorem \ref{th2}}
\label{S3.1}
{\bf Proof of (i)}: By continuity, $\lim_{\kappa \rightarrow 0}
\lambda^{(n)}_p(\kappa,\rho) = \lambda^{(n)}_p(0,\rho)$.  Now for 
$\kappa = 0$, the $X$ particles do not move so that 
$\EE_0^{\otimes n} \cro{u(0,t)^p} = \EE_0 \pare{\exp \pare{ p L^Y_t(0)}}^n$ 
(see \refeq{kappa0}), where  $L^Y_t(0)$ is the local time at $0$ of a simple 
random walk in $\Z^d$  with rate $2d\rho$. Using the LDP for $L^Y_t$, 
we obtain
\[  
\lambda^{(n)}_p(0,\rho) = \frac{n}{p} \sup_{ f \in l^2(\Z^d) 
						\atop  \nor{f}_2=1}
 \bra{f, (\rho \Delta + p \delta_0)f} 
= n \mu(\rho/p) \, . 
\]

\vspace{.3cm}
\noindent
{\bf Proof of (ii)}: For all $n, p \in \N$ and $\kappa, \rho\in[0,\infty)$, we have
\[
\lambda^{(n)}_p(\kappa,\rho) \ge \lambda^{(n)}_1(\kappa,\rho) 
= n \lambda^{(1)}_n(\rho,\kappa) \ge n \lambda^{(1)}_1(\rho,\kappa) 
= n \mu(\kappa + \rho)
\, ,
\] 
where the last equality is proved in \cite{garhey06} and comes from the
fact that $X^1_t-Y^1_t$ is a simple random walk in $\Z^d$ with jump 
rate $2d(\kappa + \rho)$. Since $G_d(0) = \infty$ for $d=1,2$, it follows from
\refeq{muprop} 
that $ \lambda^{(n)}_p(\kappa,\rho) > 0$ for $d=1,2$.

Let us prove that $\lim_{\kappa \rightarrow \infty}  
\lambda^{(n)}_p(\kappa,\rho) = 0$. By monotonicity in $\rho$,
\begin{equation}
\label{ub1}
 \lambda^{(n)}_p(\kappa,\rho) \le  \lambda^{(n)}_p(\kappa,0)
= n \mu(\kappa /n) \, .
\end{equation}
Hence the only thing to prove is that $\lim_{\kappa \rightarrow \infty}
\mu(\kappa) = 0$.
 To this end, one can use the discrete Gagliardo-Nirenberg
 inequality: there exists a constant $C$ such that for all $f: \Z^d \mapsto \R$,
 \begin{eqnarray} 
 \label{GNineq1}
 \mbox{ for } d=1 \, ,  &  & \nor{f}_{\infty}^2 \le  C \nor{f}_2 
			\nor{\nabla f}_2 \, ; \\
\label{GNineq2}		
 \mbox{ for } d=2 \, , & & \nor{f}_{4}^2 \le C \nor{f}_2 \nor{\nabla f}_2 \, .
 \end{eqnarray}
The proof of these inequalities follows the same lines as the proof of the usual 
Gagliardo-Nirenberg inequality (see e.g.\ Brezis \cite{brezis}). For completeness 
a short proof is given in the appendix.   
From (\ref{GNineq1}) and (\ref{GNineq2}), we get  for all $f \in l_2(\Z^d)$ with $\|f\|_{2}=1$,
\begin{eqnarray*} 
 - \kappa \nor{\nabla f}_2^2 + f(0)^2 
& \le &  
 	\left\{ 
 	\begin{array}{ll} 
  	- \kappa \nor{\nabla f}_{2}^2 + \nor{f}_{\infty}^2 
 	& \mbox{ for } d=1 
 	\\
  	- \kappa  \nor{\nabla f}_{2}^2 + \nor{f}_4^2
  	& \mbox{ for } d=2
 	\end{array} 
 	\right.
\\
& \le & - \kappa \nor{\nabla f}_2^2 + C   \nor{\nabla f}_2 \, .
\end{eqnarray*} 
Taking the supremum over $f$ yields 
\[ \mu(\kappa) 
\le \sup_{x \ge 0}   
\pare{ - \kappa x^2 + Cx } = \frac{C^2}{4 \kappa} \, .
\]
The strict monotonicity is now an easy consequence of the fact 
that $\kappa \mapsto \lambda^{(n)}_p(\kappa,\rho)$ is convex, positive, 
non increasing, and tends to $0$ as $\kappa\to\infty$.

\vspace{.3cm}
\noindent
{\bf Proof of (iii)}:  By \refeq{sym} and \refeq{ub1},  we get
\begin{equation}
\label{ub2}
\lambda^{(n)}_p(\kappa,\rho) 
\le n \min \pare{\mu(\kappa/n), \mu(\rho/p)}.
\end{equation}
Then the claim follows by (\ref{muprop}).

\subsection{Proof of Theorem \ref{th3}}
\label{S3.2}

\noindent
{\bf Proof of (i)}:  Fix $\epsilon > 0$. Let $f$ approaching the supremum 
in the variational representation \refeq{formvar} of $\lambda_p^{(n)}(\kappa,0)$,  so that 

\begin{eqnarray*}
p \lambda_p^{(n)}(\kappa,0) - \epsilon
& \le &
- \kappa \nor{\nabla_x f}_2^2 + \sum_{x\in\Z^{dp}} \sum_{y\in\Z^{dn}} I_p(x,y) f^2(x,y)
\\
& \le  &
p \lambda_p^{(n)}(\kappa,\rho) + \rho \sup_{f \in l^2(\Z^{dp} \times \Z^{dn})
					\atop  \nor{f}_2 =1}
\nor{\nabla_y f}_2^2  \, .
\end{eqnarray*}
For $x \in \Z^{dp}$, set $f_x: y \in \Z^{dn} \mapsto f(x,y)$. 
Since the bottom of the spectrum of $\Delta$ in $l^2(\Z^{dn})$
is $-4dn$, 
\[ \sum_{y\in\Z^{dn}} \nor{\nabla_y f_x  (y)}_2^2 \le 4dn 
\sum_{y\in\Z^{dn}} f^2_x(y)\, ,
\] 
for all $x \in \Z^{dp}$. Hence,
\[ \sum_{x\in\Z^{dp}} \sum_{y\in\Z^{dn}} \nor{\nabla_y f_x  (y)}_2^2 \le 4dn 
\sum_{x\in\Z^{dp}} \sum_{y\in\Z^{dn}} f^2_x(y) = 4dn \, .
\]
Therefore, for all $\epsilon > 0$, 
\[ 
p \lambda_p^{(n)}(\kappa,0) - \epsilon
\le p \lambda_p^{(n)}(\kappa,\rho) + 4dn  \rho \, .
\]
Letting $ \epsilon \to 0$ yields, 
 \begin{equation}
 \label{lambdabds}
\lambda_p^{(n)}(\kappa,0)  - \frac{4dn\rho}{p}
 \le \lambda_p^{(n)}(\kappa,\rho) \le  \lambda_p^{(n)}(\kappa,0)  \, ,
\end{equation}
which, after letting $p\to\infty$, gives the claim.
 
\vspace{.3cm} 
\noindent
{\bf Proof of (ii)}: By \refeq{sym}, $\lim_{n \to \infty} \lambda^{(n)}_p(\kappa,\rho)
= \lim_{n \to \infty} \frac{n}{p} \lambda^{(p)}_n(\rho, \kappa)$ and by (i), 
\[ \lim_{n \to \infty} \lambda^{(p)}_n(\rho, \kappa)
\geq \lambda^{(p)}_n(\rho, 0) = p \mu(\rho/p) > 0 \, , \mbox{ for } p > \rho/G_d(0)
\, .
\]
Hence, for $p > \rho/G_d(0)$, $\lim_{n \to \infty} \lambda^{(n)}_p(\kappa,\rho) = + \infty$.

\vspace{.3cm}
\noindent
{\bf Proof of (iii)}: This is a direct consequence of Theorem \ref{th2}(iii).
 

\section{Proof of Theorem \ref{th4}}
\label{S4}
{\bf Proof of (i)}: We first prove that 
\begin{equation}
\label{formvarkappa}
\kappa^{(n)}_p(\rho)
= \sup_{ f \in l_2(\Z^{dp}\times\Z^{dn}) \atop 	\nor{f}_2=1}
\frac{\sum_{x,y} I_p(x,y) f^2(x,y) - \rho \nor{\nabla_y f}^2_2}
{\nor{\nabla_x f}^2_2}  \, ,
\end{equation}
with $I$ defined as in \eqref{Idef}.
Indeed, let us denote by $S$ the supremum in the right-hand 
side of \refeq{formvarkappa}. 

If $\kappa \ge \kappa^{(n)}_p(\rho)$, then $\lambda^{(n)}_p(\kappa, \rho)
= 0$. Therefore, using \refeq{formvar}, for all 
$f \in l_2(\Z^{dp}\times\Z^{dn})$ such that $\nor{f}_2=1$, 
\[ 
\sum_{x\in\Z^{dp}} \sum_{y\in\Z^{dn}} I_p(x,y) f^2(x,y) 
- \rho \nor{\nabla_y f}^2_2 
\le \kappa \nor{\nabla_x f}^2_2 , 
\]
so that $\kappa \ge S$. Hence $\kappa^{(n)}_p(\rho) \ge S$.
On the opposite direction, we can assume that $S < \infty$. 
Then, by definition of $S$, for all 
$f \in l_2(\Z^{dp}\times\Z^{dn})$ such that $\nor{f}_2=1$, 
\[ \sum_{x\in\Z^{dp}} \sum_{y\in\Z^{dn}} I_p(x,y) f^2(x,y) 
- \rho \nor{\nabla_y f}^2_2 
\le S \nor{\nabla_x f}^2_2 \, .
\]
Thus, for all 
$f \in l_2(\Z^{dp}\times\Z^{dn})$ such that $\nor{f}_2=1$, and all
$\kappa \ge S$, 
\[ \sum_{x\in\Z^{dp}} \sum_{y\in\Z^{dn}} I_p(x,y) f^2(x,y) 
- \rho \nor{\nabla_y f}^2_2 - \kappa 
\nor{\nabla_x f}^2_2 \le (S-\kappa)  \nor{\nabla_x f}^2_2 \le 0 \, .
\]
Hence, for all $\kappa \ge S$, $\lambda^{(n)}_p(\kappa, \rho)
= 0$, i.e.,  $\kappa \ge \kappa^{(n)}_p(\rho)$. Hence,
$S \ge \kappa^{(n)}_p(\rho)$. This proves \refeq{formvarkappa}.
 
Since $\rho \mapsto \kappa^{(n)}_p(\rho)$ is a supremum of 
 linear functions, it is lower semi-continuous and convex.  
It is also obvious that $\rho \mapsto \kappa^{(n)}_p(\rho)$ is non increasing. The 
continuity follows then from the finiteness of $\kappa^{(n)}_p(\rho)$.
 
The lower bound in \refeq{kappabds}  is a direct consequence of \refeq{lambdabds}. 
Indeed, since $\lambda^{(n)}_p(\kappa,0)$ $= n \mu(\kappa/n)$, it follows from 
\refeq{lambdabds} that if 
$\mu(\kappa/n) > 4d\rho/p$, then $\kappa < \kappa^{(n)}_p(\rho)$. This yields the bound:  
\[ \kappa^{(n)}_p(\rho) \ge n \mu^{-1}(4d \rho/p) \, .
\]
Using the symmetry relation \refeq{sym}, we also get from \refeq{lambdabds}
that  
\[ \lambda^{(n)}_p(\kappa,\rho) \ge n \mu(\rho/p)-4d\kappa \, .
\]
This leads to $\kappa^{(n)}_p(\rho) \ge \frac{n}{4d} \mu(\rho/p)$.
Hence, if $\rho/p < G_d(0)$, $\kappa^{(n)}_p(\rho) > 0$. We have
already seen that $\kappa^{(n)}_p(\rho) = 0$ if $\rho/p \ge G_d(0)$.
Since $\lambda^{(n)}_p(\kappa,0)=n \mu(\kappa/n)$, it follows that
$\kappa^{(n)}_p(0) = nG_d(0)$. Using convexity, we have, for all $\rho 
\in [0,pG_d(0)]$,
\[ \kappa^{(n)}_p(\rho) \le
\frac{\kappa^{(n)}_p(pG_d(0))-\kappa^{(n)}_p(0)}{pG_d(0)} \rho 
+ \kappa^{(n)}_p(0)= n \pare{G_d(0) - \rho/p} \, .
\]
Since $\kappa^{(n)}_p(\rho) = 0$ if $\rho/p \ge G_d(0)$, then the upper
bound in \refeq{kappabds} is proved. 
 
\vspace{.3cm}
\noindent 
{\bf Proof of (ii)}: To prove \refeq{kappabds2}, let $f_0$ be the function 
\[ f_0 (x,y) = \prod_{i=1}^p \frac{G_d(x_i)}{\nor{G_d}_2} \, \prod_{j=1}^n \delta_0(y_j) \, .
\]
Note that for $d \ge 5$, $\nor{G_d}_2 < \infty$, so that $f_0$ is well-defined, 
and has $l_2$-norm  equal to $1$. From \refeq{formvarkappa}, we get 
\[ \kappa^{(n)}_p(\rho) \ge \frac{\sum_{x,y} I_p(x,y) f_0^2(x,y) - \rho \nor{\nabla_y f_0}_2^2}
{ \nor{\nabla_x f_0}_2^2} \, .
\]
An easy computation then gives
\[  \sum_{x,y} I_p(x,y) f_0^2(x,y) = np 	\frac{G_d^2(0)}{\nor{G_d}^2_2} \, ,
\]
\[ \nor{\nabla_y f_0}_2^2 = n \nor{\nabla_{y_1} \delta_0}_2^2 = 2d n \, ,
\]
and
\[ \nor{\nabla_x f_0}_2^2 = p \frac{ \nor{\nabla_{x_1} G_d}_2 ^2}{\nor{G_d}_2^2}=
p \frac{G_d(0)}{\nor{G_d}_2^2} \, , 
\]
since $\nor{\nabla_{x_1} G_d}_2 ^2 = \bra{G_d , - \Delta G_d} =\bra{G_d , \delta_0}= G_d(0)$.
This gives \refeq{kappabds2}. 

\vspace{.3cm}
\noindent
{\bf Proof of (iii):}  The inequality \refeq{kappa_croit} is clear if $ \rho \in [(p-1)G_d(0), pG_d(0))$,
since in this case, $\kappa^{(n)}_{p-1}(\rho)=0 < \kappa^{(n)}_{p}(\rho)$. We assume therefore
that $\rho \in (0, (p-1)G_d(0))$. From \refeq{kappabds}, we have $\kappa^{(n)}_{p-1}(\rho) \le
n G_d(0) - \rho n/(p-1)$, whereas, from \refeq{kappabds2}, $\kappa^{(n)}_{p}(\rho) \ge
n G_d(0) - \rho n/(p \alpha_d)$. Hence $\kappa^{(n)}_{p-1}(\rho) < \kappa^{(n)}_{p}(\rho)$
as soon as $\alpha_d > \frac{p-1}{p}$.  This gives the claim.


\section{Proof of Corollary \ref{th5}}
\label{S5}
{\bf Proof of (i):} The function 
$p \mapsto \lambda^{(n)}_p(\kappa, \rho)$ increases from 
$\lambda^{(n)}_1(\kappa, \rho)$ to $n \mu(\kappa/n)$. Hence, there
exists $p$ such that 
$\lambda^{(n)}_p(\kappa, \rho) < \lambda^{(n)}_{p+1} (\kappa, \rho)$
as soon as  $\lambda^{(n)}_1(\kappa, \rho) < n \mu(\kappa/n)$. 
But $ n \mu(\kappa/n) = \lambda^{(n)}_1 (\kappa, 0)$. Hence,  if 
$\lambda^{(n)}_1(\kappa, \rho) = n \mu(\kappa/n)$, the convex 
decreasing function $\rho \mapsto  \lambda^{(n)}_1(\kappa, \rho)$ 
is constant. Being equal to 0 for $\rho \ge G_d(0)$, we get that 
$n \mu(\kappa/n) = 0$, which can not be  the case if $\kappa < nG_d(0)$.
This ends the proof of the first part.
 
If $\kappa \ge n G_d(0)$, then $\lambda^{(n)}_p(\kappa, \rho)=0$, for all 
$p\ge 1$, and the system is not intermittent. This proves the second part.

\vspace{.3cm}
\noindent
{\bf Proof of (ii):} For all $p \in \N\setminus\{1\}$ by Lemma \ref{lm} 
for $d$ large enough we have $\alpha_d > \frac{p-1}{p}$. This implies that 
$\alpha_d>\frac{q-1}{q}$ for all $q\in\N\setminus\{1\}$ and $q\leq p$.
Hence, by Theorem \ref{th4}(iii), for all $q\in\N\setminus\{1\}$ with $q\leq p $ 
we have $\kappa^{(n)}_{q-1}(\rho)< \kappa^{(n)}_{q}(\rho)$, for all 
$\rho \in (0, pG_d(0))$. Hence, in the domain 
$$\acc{(\kappa,\rho)\colon \rho \in (0, qG_d(0))\, , \,\,\, 
\kappa^{(n)}_{q-1}(\rho)\le \kappa < \kappa^{(n)}_{q}(\rho)}$$ one has 
\[ \lambda^{(n)}_1(\kappa, \rho)= \cdots = \lambda^{(n)}_{q-1}(\kappa, \rho)=0 
< \lambda^{(n)}_{q}(\kappa, \rho) \, ,
\]
which proves the desired result.

\begin{acknowledgement}
The research in this paper was supported by the ANR-project MEMEMO.
\end{acknowledgement}


\section*{Appendix: Proof of lemma \ref{alpha}}
\addcontentsline{toc}{section}{Appendix}

For a function $f: \Z^d \mapsto \R$, let $\hat{f}$ denote the Fourier transform of $f$:
\[ \hat{f}(\theta) = \sum_{x \in \Z^d} e^{i \bra{\theta,x}} f(x) \quad \forall \theta \in [0,2\pi]^d \, .
\]
Then, the inverse Fourier transform is given by 
\[ f(x) = \frac{1}{(2\pi)^d} \int_{[0,2\pi]^d} \, e^{-i \bra{\theta,x}} \hat{f}(\theta) \,  d \theta \, , 
\]
and the Plancherel's formula reads
\[ \sum_{x \in \Z^d} f^2(x)= \frac{1}{(2 \pi)^d} \int_{[0,2\pi]^d} |\hat{f}(\theta)|^2 \, d\theta \, .
\]
Using the equation $\Delta G_d = -\delta_0$ we get that 
\[ \hat{G}_d(\theta) = \frac{1}{2 \sum_{i=1}^d (1 - \cos(\theta_i))} \, .
\]
Hence, 
 \begin{align*}
G_d(0) & =  \frac{1}{(2\pi)^d} \int_{[0,2\pi]^d} \frac{d\theta}{2 \sum_{i=1}^d (1 - \cos(\theta_i))} \\
       & =  \;\;\frac{1}{\pi^d}\;\; \int_{[0,\pi]^d} \frac{d\theta}{2 \sum_{i=1}^d (1 - \cos(\theta_i))} \\
       & = \;\EE \cro{ \frac{1}{2 \sum_{i=1}^d (1-\cos(\Theta_i))}} \, 
 \end{align*} 
where the random variables $(\Theta_i)$ 
are i.i.d.\ with uniform distribution on $[0,\pi]$.
 Moreover, by Plancherel's formula we have 
 \[ \nor{G_d}_2^2 = \frac{1}{(2 \pi)^d} 
 \int_{[0,2\pi]^d} \frac{d\theta}{\pare{2 \sum_{i=1}^d (1 - \cos(\theta_i))}^2} 
 = \EE \cro{ \frac{1}
{\pare{2 \sum_{i=1}^d (1-\cos(\Theta_i))}^2}} \,.
 \]
 Thus, 
 \[ \alpha_d = \frac{G_d(0)}{2d \nor{G_d}_2^2} = \frac{\EE \cro{\frac{1}{\bar{S}_d}}}
 				{\EE \cro{ \frac{1}{\bar{S}_d^2}}} \, , 
\]
where $  \bar{S}_d = \frac{1}{d} \sum_{i=1}^d (1-\cos(\Theta_i)) $. 
Applying H\"older's and Jensen's inequality, we get that 
\[ \alpha_d \le \frac{1}{\sqrt{\EE \cro{ \frac{1}{\bar{S}_d^2}}}} \le
\EE(\bar{S}_d)
=1 \, .
\]
By the law of large numbers, $\bar{S}_d$ converges almost surely to $\EE\cro{1-\cos(\Theta)}=1$
as $d$ tends to infinity. We are now going to prove that $\bar{S}_d^{-2}$ is uniformly integrable
by showing that for all $p > 2$,
\begin{equation}
\label{UI}
\sup_{d > 2p} \EE \cro{\bar{S}_d^{-p}} < \infty.
\end{equation}
Indeed, let $\epsilon \in (0,\pi)$ be a small positive number to be fixed later.
Let 
\[ \cI = \acc{i \in \{1,\cdots,d\}\colon 0 \le \Theta_i \leq \epsilon} \, .
\]
\[ \bar{S}_d \ge \frac{1}{d} \sum_{i \notin \cI} (1-\cos(\epsilon)) 
		+ \frac{c_{\epsilon}}{d} \sum_{i \in \cI} \Theta_i^2
\, ,
\]
where $c_{\epsilon} = \inf_{0 \le \theta \le \epsilon} \frac{1-\cos(\theta)}
{\theta^2} \rightarrow 1/2$ when $\epsilon \rightarrow 0$. Therefore, 
\[
\EE \cro{\bar{S}_d^{-p}} 
\le  d^{p}
	\sum_{k=0}^d \sum_{{I \subset \acc{1,\cdots, d}} \atop {|I| = k}}
        \EE \cro{
	\frac{ \ind{\cI = I} }{\pare{(1-\cos(\epsilon))(d-k) + c_{\epsilon}
	\sum_{i \in  I} \Theta_i^2}^p}}
\, .
\]
Since the last expectation only depends on $|I|$, we get
\[
\EE \cro{\bar{S}_d^{-p}} 
\le d^{p} \sum_{k=0}^d \pare{\begin{array}{c} d \\ k \end{array}}
		a(k,\epsilon,d)
\, ,
\] 
with
\[ a(k,\epsilon,d) := \frac{1}{\pi^d} 
\int_{{0 \le \theta_1, \cdots , \theta_k \le \epsilon} \atop 
{\epsilon  \le \theta_{k+1},
\cdots, \theta_d \le \pi}} \frac{ d\theta_1 \cdots d\theta_d}
{\pare{(1-\cos(\epsilon))(d-k) 
+ c_{\epsilon} (\theta_1^2 + \cdots + \theta_k^2)}^p}
\, .
\]
Let $\omega_d$ denote the volume of the $d$-dimensional unit
ball. For $k=d$,
\begin{eqnarray*}  
a(d,\epsilon,d) & = & \frac{1}{\pi^d} 
 \int_{0 \le \theta_1, \cdots , \theta_d \le \epsilon} 
\frac{ d\theta_1 \cdots d\theta_d}
{ c_{\epsilon}^p \nor{\theta}^{2p}}
\\
& \le & \frac{1}{c_{\epsilon}^p \pi^d} \omega_d \int_0^{\sqrt{d} \epsilon}
	r^{d-2p-1} dr 
\\
& = &  \pare{\frac{\epsilon}{\pi}}^d \frac{1}{(c_{\epsilon} \epsilon^2)^p}
d^{\frac{d}{2}-p} \frac{\omega_d}{d - 2p} \, ,  
\end{eqnarray*} 
for $d > 2p$. 

 Note that for large $d$, $\omega_d \simeq \frac{(2e\pi)^{d/2}}{\sqrt{\pi d} d^{d/2}}$.
Therefore, as $d\to\infty$
\[ d^p \pare{d \atop d} a(d,\epsilon, d) = O \pare{ d^{-3/2} 
(\epsilon^2 2e/\pi)^{d/2}} \, .
\] 
 If $\epsilon$ is chosen so that $\epsilon^2 \le
\pi/(2e)$, we obtain that $\lim_{d \rightarrow \infty}  
d^p \pare{d \atop d} a(d,\epsilon , d) = 0$.

For $k \le d-1$, 
\[ a(k,\epsilon,d) \le \frac{1}{(1-\cos(\epsilon))^p}
\frac{1}{(d-k)^p} 
\pare{\frac \epsilon \pi}^k \pare{1- \frac \epsilon \pi}^{d-k} \, ,
\]
and  $d^p \pare{d \atop k} a(k,\epsilon, d) 
\le \frac{1}{(1-\cos(\epsilon))^p} 
\EE \cro{ \ind{N = k} (1-N/d)^{-p}}$, where $N$ is a Binomial random variable 
with parameters $d$ and $\epsilon/\pi$. Hence, for $\epsilon < \min(\pi, \sqrt{\pi/(2e)})$, 
\begin{eqnarray*} 
&&\EE\cro{\frac{1}{\bar{S}_d^{p}}} 
\nonumber
\\
&& \,\,\,\,\le  \frac{1}{(1-\cos(\epsilon))^p}
\EE \cro{ \ind{N \le d-1} (1-N/d)^{-p}}
+ O\pare{ d^{-3/2} }
\\
&& \,\,\,\,\le  \frac{d^p}{(1-\cos(\epsilon))^p} 
\PP \cro{d \frac{2 \epsilon}{\pi} \le N \le d-1}
     + \frac{1}{(1-\cos(\epsilon))^p  (1- \frac{2\epsilon}{\pi})^{p}}  
     + O\pare{ d^{-3/2} } \, .
\end{eqnarray*} 
Now, by the large deviations principle satisfied by $N/d$, there is an
$i(\epsilon) > 0$ such that $\PP \cro{ N \ge d 2 \epsilon /\pi}
\le \exp(-d i(\epsilon))$.
This ends the proof of \refeq{UI}.

  Using the uniform integrability \refeq{UI}, and the fact that $\bar{S}_d$
converges a.s. to 1, we obtain that $\EE\cro{\frac{1}{\bar{S}_d}}$ and
$\EE\cro{\frac{1}{\bar{S}_d^2}}$ both converge to 1, 
when $d$ goes to infinity.

\section*{Appendix: Proof of proposition \ref{eigenfunc}.}

Let $f\in l_2(\Z^{d(1+n)})$ with $\nor{f}_2=1$, such that 
$\cL_1 f = \lambda_1^{(n)}(\kappa, \rho) f$. Define 
\begin{equation*}
\tilde{f}(x_1, x_2, y)
= f(x_1,y) f(x_2,y),
\quad x_1,y_1\in\Z^d,\, y\in\Z^{dn}\, .
\end{equation*} 
Since
\[ \sum_{x_1, x_2, y} \tilde{f}^2(x_1, x_2, y)
= \sum_y \pare{\sum_x f^2(x,y)}^2 
\le \pare{\sup_y \sum_x f^2(x,y)} \nor{f}_2^2 \le \nor{f}_2^4
\, ,
\]
it follows that $\tilde{f}$ is in $l_2(\Z^{d(2+n)})$. A simple computation yields
\[ \Delta_{x_1} \tilde{f}(x_1, x_2, y) = f(x_2,y) \Delta_{x}f(x_1,y) 
\, ,   \quad 
\Delta_{x_2} \tilde{f}(x_1, x_2, y) = f(x_1,y) \Delta_{x}f(x_2,y) 
\, ,
\]
and
\begin{eqnarray*}
\Delta_{y} \tilde{f}(x_1, x_2, y) = f(x_2,y) \Delta_{y}f(x_1,y) 
			& + & f(x_1,y) \Delta_{y}f(x_2,y) 
			\\
			& + &  \sum_{z \sim y} (f(x_1,z)-f(x_1,y)) (f(x_2,z)-f(x_2,y)) \, .
\end{eqnarray*}
Since
\begin{equation*}
I_2 \tilde f(x_1,x_2,y)= f(x_{2},y)I_1 f(x_1,y) + f(x_{1},y) I_1 f(x_2,y)\, ,
\end{equation*} 
(recalling (\ref{Idef})), this leads to 
\begin{equation*}
\begin{aligned} 
&\cL_2 \tilde{f}(x_1,x_2,y)\\  
&\quad=   2  \lambda_1^{(n)}(\kappa, \rho) \tilde{f}(x_1,x_2,y) 
+ \rho  \sum_{z \sim y} \big(f(x_1,z)-f(x_1,y)\big) \big(f(x_2,z)-f(x_2,y)\big) 
\end{aligned}
\end{equation*}
(recalling (\ref{Lpdef}). Therefore 
\begin{eqnarray*}
 \lambda_2^{(n)}(\kappa, \rho) \nor{\tilde{f}}_2^2 
&  \ge & \frac{1}{2} \bra{\tilde{f}, \cL_2 \tilde{f} }
\\
&  = &  \lambda_1^{(n)}(\kappa, \rho) \nor{\tilde{f}}_2^2  +  \frac{\rho}{2}
 \sum_{y,z\sim y}  \pare{\sum_x f(x,y) (f(x,z)-f(x,y)) }^2 \, .
\end{eqnarray*}
Note that 
\begin{equation*}
\sum_{y,z\sim y}  \pare{\sum_x f(x,y) (f(x,z)-f(x,y)) }^2 \ge 0
\end{equation*} 
with equality to $0$ if and only if for all  $y$ and $z \sim y$, $\sum_x f(x,y) (f(x,z)-f(x,y)) = 0$. Interchanging 
the role of $z$ and $y$ yields $\sum_x  (f(x,z)-f(x,y))^2 = 0$, so that  for all $x$, $y$  and $z \sim y$, 
$f(x,z) = f(x,y)$. Hence, for all $x$, $y$,  $f(x,y)=f(x,0)$. This is impossible since $\nor{f}_2=1$.
Thus, $\lambda_2^{(n)}(\kappa, \rho) > \lambda_1^{(n)}(\kappa, \rho)$.
\qed

\section*{Appendix: Proof of the discrete Gagliardo-Nirenberg inequality.}
\addcontentsline{toc}{section}{Appendix}
{\bf Proof for $d=1$.}
One can assume that $\nor{f}_2 < \infty$, otherwise there is nothing to prove.  
Hence $\lim_{\va{x} \rightarrow \infty} \va{f(x)} = 0$, and by the Cauchy-Schwarz inequality,
we have for all $x \in \Z$, 
\begin{eqnarray*}
f^2(x) & = &  \sum_{j=-\infty}^{x} f^2(j) - f^2(j-1)
\\ 
& \le & \sum_{j=-\infty}^{+\infty} \va{f(j) - f(j-1)} (\va{f(j)}+\va{f(j-1)})
\\
& \le & 2 \sqrt{\sum_{j} \va{f(j) - f(j-1)}^2} \sqrt{ \sum_{j} f^2(j)}
\\
& = & 2 \nor{f }_2 \nor{\nabla f}_2 \, ,
\end{eqnarray*}
which proves \refeq{GNineq1} with $C=2$. 
\\
{\bf Proof for $d=2$.}
Here again, one can assume that $\nor{f}_2 < \infty$,  and consequently 
$\lim_{\va{x_1} \rightarrow \infty} \va{f(x_1,x_2)} = 0$.  Then, by the Cauchy-Schwarz 
inequality, we have for all $x_1, x_2 \in \Z$, 
\begin{eqnarray*}
f^2(x_1,x_2) & = &  \sum_{j_1=-\infty}^{x_1} f^2(j_1,x_2) - f^2(j_1-1,x_2)
\\ 
& \le & \sum_{j_1=-\infty}^{+\infty} \va{f(j_1,x_2) - f(j_1-1,x_2)} (\va{f(j_1,x_2)}+\va{f(j_1-1,x_2)})
\\
& \le & 2 \sqrt{\sum_{j_1} \va{f(j_1,x_2) - f(j_1-1,x_2)}^2} \sqrt{ \sum_{j_1} f^2(j_1,x_2)}
\\
& = & 2 \nor{f( \,\cdot, x_2)}_2 \sqrt{\sum_{j_1} \va{\nabla_{x_1}f(j_1,x_2)}^2} := 2 \tilde{f}_1(x_2)\, .
\end{eqnarray*}
Similarly, we have
\begin{equation*}
f^2(x_1,x_2) 
\leq 2 \nor{f(x_1,\cdot\,)}_2 \sqrt{\sum_{j_2} \va{\nabla_{x_2}f(x_1,j_2)}^2} 
:= 2 \tilde{f}_2(x_1) \, .
\end{equation*}
Thus
\[
\sum_{x_1,x_2} f^4(x_1,x_2) \le 4 \pare{\sum_{x_2} \tilde{f}_1(x_2)}  \pare{\sum_{x_1} \tilde{f}_2(x_1)}
\, . 
\]
Since 
\begin{eqnarray*}
 \sum_{x_2} \tilde{f}_1(x_2) & = & \sum_{x_2}  \nor{f( \,\cdot, x_2)}_2 \sqrt{\sum_{j_1} \va{\nabla_{x_1}f(j_1,x_2)}^2}
\\
& \le & \sqrt{  \sum_{x_2} \nor{f( \,\cdot, x_2)}_2^2} 
\sqrt{ \sum_{x_2} \sum_{j_1}  \va{\nabla_{x_1}f(j_1,x_2)}^2}
\\
& \le  & \nor{f}_2  \nor{ \nabla f}_2 \, ,
\end{eqnarray*}
and the same being true for $\sum_{x_1} \tilde{f}_2(x_1)$, it follows that 
\[ \nor{f}_4^4 \le 4 \nor{f}_2^2   \nor{ \nabla f}_2^2 \, ,
\]
which proves \refeq{GNineq2} with $C=2$.
%
%

\begin{thebibliography}{99.}%
%
%
%
%
%
%

\bibitem{brezis}
Brezis H.:
Analyse fonctionnelle: Th\'eorie et applications. 
Collection Math\'ematiques Appliqu\'ees pour la Ma\^{\i}trise, Masson, Paris (1983)

\bibitem{carmol94}
Carmona R.A., Molchanov S.A.:
Parabolic Anderson Problem and Intermittency.
AMS Memoir 518, American Mathematical Society, Providence RI (1994)


\bibitem{garhey06}
G\"artner J., Heydenreich M.:
Annealed asymptotics for the parabolic Anderson model with a moving catalyst.
Stoch. Proc. Appl. \textbf{116}, 1511--1529 (2006)
 
\bibitem{garhol06}
G\"artner J., den Hollander F.:
Intermittency in a catalytic random medium.
Ann. Probab. \textbf{34}, 2219--2287 (2006)

\bibitem{garholmai07}
G\"artner J., den Hollander F., Maillard G.:
Intermittency on catalysts: symmetric exclusion.
Electronic J. Probab. \textbf{12}, 516--573 (2007)

\bibitem{garholmai08HvW}
G\"artner J., den Hollander F., Maillard G.:
Intermittency on catalysts. 
In: Blath J., M\"orters P., Scheutzow M. (eds.) 
Trends in Stochastic Analysis,
London Mathematical Society Lecture Note Series 353,
pp. 235-248. Cambridge University Press, Cambridge (2009)

\bibitem{garholmai09}
G\"artner J., den Hollander F., Maillard G.:
Intermittency on catalysts: three-dimensional simple symmetric exclusion.
Electronic J. Probab. \textbf{72}, 2091--2129 (2009)

\bibitem{garholmai10}
G\"artner J., den Hollander F., Maillard G.:
Intermittency on catalysts: voter model.
Ann. Probab. \textbf{38}, 2066--2102 (2010)


\bibitem{garkon05}
G\"artner J., K\"onig W.: 
The parabolic Anderson model. 
In: Deuschel J.-D., Greven A. (eds.)
Interacting Stochastic Systems,
pp. 153-179, Springer, Berlin (2005) 

\bibitem{garmol90}
G\"artner J., Molchanov S.A.: 
Parabolic problems for the Anderson model. 
Commun. Math. Phys. \text{132}, 613--655 (1990)

\bibitem{grehol07}
Greven A., den Hollander F.:
Phase transition for the long-time behavior of interacting diffusions.
Ann. Probab. \textbf{35}, 1250--1306 (2007)

\bibitem{hol00}
den Hollander F.:
Large Deviations. 
Fields Institute Monographs 14, American Mathematical Society, Providence, RI, (2000)

\bibitem{kessid03}
Kesten H., Sidoravicius V.:
Branching random walk with catalysts.
Electr. J. Prob. \textbf{8}, 1--51 (2003)

\bibitem{maimousch11}
Maillard G., Mountford T., Sch\"opfer S.:
Parabolic Anderson model with voter catalysts: dichotomy in the behavior of Lyapunov exponents.
In this volume.

\bibitem{schwol10}
Schnitzler A., Wolff T.:
Precise asymptotics for the parabolic Anderson model with a moving catalyst or trap.
In this volume.
\end{thebibliography}
%

\end{document}